\newtheorem{theorem}{Theorem}
\theoremstyle{plain}
\newtheorem{corollary}{Corollary}
\newtheorem{lemma}{Lemma}
\newtheorem{proposition}{Proposition}
\newtheorem{remark}{Remark}
\numberwithin{equation}{section}
\begin{document}
\title[Quadratic Weighted Geometric Mean]{Quadratic Weighted Geometric Mean
in Hermitian Unital Banach $\ast $-Algebras}
\author{S. S. Dragomir$^{1,2}$}
\address{$^{1}$Mathematics, College of Engineering \& Science\\
Victoria University, PO Box 14428\\
Melbourne City, MC 8001, Australia.}
\email{sever.dragomir@vu.edu.au}
\urladdr{http://rgmia.org/dragomir}
\address{$^{2}$DST-NRF Centre of Excellence \\
in the Mathematical and Statistical Sciences, School of Computer Science \&
Applied Mathematics, University of the Witwatersrand, Private Bag 3,
Johannesburg 2050, South Africa}
\subjclass{47A63, 47A30, 15A60, 26D15, 26D10}
\keywords{Weighted geometric mean, Weighted harmonic mean, Young's
inequality, Operator modulus, Arithmetic mean-geometric mean-harmonic mean
inequality.}

\begin{abstract}
In this paper we introduce the \textit{quadratic weighted geometric mean} 
\begin{equation*}
x\circledS _{\nu }y:=\left\vert \left\vert yx^{-1}\right\vert ^{\nu
}x\right\vert ^{2}
\end{equation*}%
for invertible elements $x,$ $y$ in a Hermitian unital Banach $\ast $%
-algebra and real number $\nu $. We show that 
\begin{equation*}
x\circledS _{\nu }y=\left\vert x\right\vert ^{2}\sharp _{\nu }\left\vert
y\right\vert ^{2},
\end{equation*}%
where $\sharp _{\nu }$ is the usual geometric mean and provide some
inequalities for this mean under various assumptions for the elements
involved.
\end{abstract}

\maketitle

\section{Introduction}

Let $A$ be a unital Banach $\ast $-algebra with unit $1$. An element $a\in A$
is called \textit{selfadjoint} if $a^{\ast }=a.$ $A$ is called \textit{%
Hermitian} if every selfadjoint element $a$ in $A$ has real \textit{spectrum}
$\sigma \left( a\right) ,$ namely $\sigma \left( a\right) \subset \mathbb{R}$%
.

In what follows we assume that $A$ is a Hermitian unital Banach $\ast $%
-algebra.

We say that an element $a$ is \textit{nonnegative} and write this as $a\geq
0 $ if $a^{\ast }=a$ and $\sigma \left( a\right) \subset \left[ 0,\infty
\right) .$ We say that $a$ is \textit{positive }and write $a>0$ if $a\geq 0$
and $0\notin \sigma \left( a\right) .$ Thus $a>0$ implies that its inverse $%
a^{-1}$ exists. Denote the set of all invertible elements of $A$ by $%
\limfunc{Inv}\left( A\right) .$ If $a,b\in \limfunc{Inv}\left( A\right) ,$
then $ab\in \limfunc{Inv}\left( A\right) $ and $\left( ab\right)
^{-1}=b^{-1}a^{-1}.$ Also, saying that $a\geq b$ means that $a-b\geq 0$ and,
similarly $a>b$ means that $a-b>0.$

The \textit{Shirali-Ford theorem} asserts that \cite{SF} (see also \cite[%
Theorem 41.5]{BD})%
\begin{equation}
a^{\ast }a\geq 0\text{ for every }a\in A.  \tag{SF}  \label{SF}
\end{equation}%
Based on this fact, Okayasu \cite{O}, Tanahashi and Uchiyama \cite{TU}
proved the following fundamental properties (see also \cite{F}):

\begin{enumerate}
\item[(i)] If $a,$ $b\in A,$ then $a\geq 0,$ $b\geq 0$ imply $a+b\geq 0$ and 
$\alpha \geq 0$ implies $\alpha a\geq 0;$

\item[(ii)] If $a,$ $b\in A,$ then $a>0,$ $b\geq 0$ imply $a+b>0;$

\item[(iii)] If $a,$ $b\in A,$ then either $a\geq b>0$ or $a>b\geq 0$ imply $%
a>0;$

\item[(iv)] If $a>0,$ then $a^{-1}>0;$

\item[(v)] If $c>0,$ then $0<b<a$ if and only if $cbc<cac,$ also $0<b\leq a$
if and only if $cbc\leq cac;$

\item[(vi)] If $0<a<1,$ then $1<a^{-1};$

\item[(vii)] If $0<b<a,$ then $0<a^{-1}<b^{-1},$ also if $0<b\leq a,$ then $%
0<a^{-1}\leq b^{-1}.$
\end{enumerate}

Okayasu \cite{O} showed that the \textit{L\"{o}wner-Heinz inequality}
remains valid in a Hermitian unital Banach $\ast $-algebra with continuous
involution, namely if $a,$ $b\in A$ and $p\in \left[ 0,1\right] $ then $a>b$ 
$\left( a\geq b\right) $ implies that $a^{p}>b^{p}$ $\left( a^{p}\geq
b^{p}\right) .$

In order to introduce the real power of a positive element, we need the
following facts \cite[Theorem 41.5]{BD}.

Let $a\in A$ and $a>0,$ then $0\notin \sigma \left( a\right) $ and the fact
that $\sigma \left( a\right) $ is a compact subset of $\mathbb{C}$ implies
that $\inf \{z:z\in \sigma \left( a\right) \}>0$ and $\sup \{z:z\in \sigma
\left( a\right) \}<\infty .$ Choose $\gamma $ to be close rectifiable curve
in $\{\func{Re}z>0\},$ the right half open plane of the complex plane, such
that $\sigma \left( a\right) \subset \limfunc{ins}\left( \gamma \right) ,$
the inside of $\gamma .$ Let $G$ be an open subset of $\mathbb{C}$ with $%
\sigma \left( a\right) \subset G.$ If $f:G\rightarrow \mathbb{C}$ is
analytic, we define an element $f\left( a\right) $ in $A$ by 
\begin{equation*}
f\left( a\right) :=\frac{1}{2\pi i}\int_{\gamma }f\left( z\right) \left(
z-a\right) ^{-1}dz.
\end{equation*}%
It is well known (see for instance \cite[pp. 201-204]{C}) that $f\left(
a\right) $ does not depend on the choice of $\gamma $ and the Spectral
Mapping Theorem (SMT) 
\begin{equation*}
\sigma \left( f\left( a\right) \right) =f\left( \sigma \left( a\right)
\right)
\end{equation*}
holds.

For any $\alpha \in \mathbb{R}$ we define for $a\in A$ and $a>0,$ the real
power 
\begin{equation*}
a^{\alpha }:=\frac{1}{2\pi i}\int_{\gamma }z^{\alpha }\left( z-a\right)
^{-1}dz,
\end{equation*}%
where $z^{\alpha }$ is the principal $\alpha $-power of $z.$ Since $A$ is a
Banach $\ast $-algebra, then $a^{\alpha }\in A.$ Moreover, since $z^{\alpha
} $ is analytic in $\{\func{Re}z>0\},$ then by (SMT) we have%
\begin{equation*}
\sigma \left( a^{\alpha }\right) =\left( \sigma \left( a\right) \right)
^{\alpha }=\{z^{\alpha }:z\in \sigma \left( a\right) \}\subset \left(
0,\infty \right) .
\end{equation*}

Following \cite{F}, we list below some important properties of real powers:

\begin{enumerate}
\item[(viii)] If $0<a\in A$ and $\alpha \in \mathbb{R}$, then $a^{\alpha
}\in A$ with $a^{\alpha }>0$ and $\left( a^{2}\right) ^{1/2}=a,$ \cite[Lemma
6]{TU};

\item[(ix)] If $0<a\in A$ and $\alpha ,$ $\beta \in \mathbb{R}$, then $%
a^{\alpha }a^{\beta }=a^{\alpha +\beta };$

\item[(x)] If $0<a\in A$ and $\alpha \in \mathbb{R}$, then $\left( a^{\alpha
}\right) ^{-1}=\left( a^{-1}\right) ^{\alpha }=a^{-\alpha };$

\item[(xi)] If $0<a,$ $b\in A$, $\alpha ,$ $\beta \in \mathbb{R}$ and $%
ab=ba, $ then $a^{\alpha }b^{\beta }=b^{\beta }a^{\alpha }.$
\end{enumerate}

We define the following means for $\nu \in \left[ 0,1\right] ,$ see also 
\cite{F} for different notations:%
\begin{equation}
a\nabla _{\nu }b:=\left( 1-\nu \right) a+\nu b,\text{ }a,\text{ }b\in A 
\tag{A}  \label{A}
\end{equation}%
the \textit{weighted arithmetic mean }of $\left( a,b\right) ,$%
\begin{equation}
a!_{\nu }b:=\left( \left( 1-\nu \right) a^{-1}+\nu b^{-1}\right) ^{-1},\text{
}a,\text{ }b>0  \tag{H}  \label{H}
\end{equation}%
the \textit{weighted harmonic mean} of positive elements $\left( a,b\right) $
and 
\begin{equation}
a\sharp _{\nu }b:=a^{1/2}\left( a^{-1/2}ba^{-1/2}\right) ^{\upsilon }a^{1/2}
\tag{G}  \label{G}
\end{equation}%
the \textit{weighted geometric mean }of positive elements $\left( a,b\right)
.$ Our notations above are motivated by the classical notations used in
operator theory. For simplicity, if $\nu =\frac{1}{2},$ we use the simpler
notations $a\nabla b,$ $a!b$ and $a\sharp b.$ The definition of weighted
geometric mean can be extended for any real $\nu .$

In \cite{F}, B. Q. Feng proved the following properties of these means in $A$
a Hermitian unital Banach $\ast $-algebra:

\begin{enumerate}
\item[(xii)] If $0<a,$ $b\in A,$ then $a!b=b!a$ and $a\sharp b=b\sharp a;$

\item[(xiii)] If $0<a,$ $b\in A$ and $c\in \limfunc{Inv}\left( A\right) ,$
then%
\begin{equation*}
c^{\ast }\left( a!b\right) c=\left( c^{\ast }ac\right) !\left( c^{\ast
}bc\right) \text{ and }c^{\ast }\left( a\sharp b\right) c=\left( c^{\ast
}ac\right) \sharp \left( c^{\ast }bc\right) ;
\end{equation*}

\item[(xiv)] If $0<a,$ $b\in A$ and $\nu \in \left[ 0,1\right] $, then 
\begin{equation*}
\left( a!_{\nu }b\right) ^{-1}=\left( a^{-1}\right) \nabla _{\nu }\left(
b^{-1}\right) \text{ and }\left( a^{-1}\right) \sharp _{\nu }\left(
b^{-1}\right) =\left( a\sharp _{\nu }b\right) ^{-1}.
\end{equation*}
\end{enumerate}

Utilising the Spectral Mapping Theorem and the Bernoulli inequality for real
numbers, B. Q. Feng obtained in \cite{F} the following inequality between
the weighted means introduced above: 
\begin{equation}
a\nabla _{\nu }b\geq a\sharp _{\nu }b\geq a!_{\nu }b  \tag{HGA}  \label{HGA}
\end{equation}%
for any $0<a,$ $b\in A$ and $\nu \in \left[ 0,1\right] .$

In \cite{TU}, Tanahashi and Uchiyama obtained the following identity of
interest:

\begin{lemma}
\label{l.1.1}If $0<c,$ $d$ and $\lambda $ is a real number, then%
\begin{equation}
\left( dcd\right) ^{\lambda }=dc^{1/2}\left( c^{1/2}d^{2}c^{1/2}\right)
^{\lambda -1}c^{1/2}d.  \label{e.1.1}
\end{equation}
\end{lemma}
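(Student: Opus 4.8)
The plan is to reduce the stated identity to the standard intertwining relation between $uu^{\ast }$ and $u^{\ast }u$. First I would put $u:=dc^{1/2}$, which is invertible, and note that $u^{\ast }=c^{1/2}d$ since $c$, $d$, and hence $c^{1/2}$, are selfadjoint. A direct multiplication then gives $uu^{\ast }=dc^{1/2}c^{1/2}d=dcd$ and $u^{\ast }u=c^{1/2}dd\,c^{1/2}=c^{1/2}d^{2}c^{1/2}$. Both of these are of the form $w^{\ast }w$, so by \eqref{SF} they are nonnegative, and since $u$ is invertible they are in fact positive; consequently every real power appearing in \eqref{e.1.1} is well defined by the functional calculus set up above. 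In this notation the identity \eqref{e.1.1} reads precisely
\[
\left( uu^{\ast }\right) ^{\lambda }=u\left( u^{\ast }u\right) ^{\lambda -1}u^{\ast }.
\]

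The heart of the matter is the intertwining identity $\left( uu^{\ast }\right) ^{\mu }u=u\left( u^{\ast }u\right) ^{\mu }$, valid for every real $\mu $. To prove it I would begin from the elementary relation $\left( z-uu^{\ast }\right) u=zu-uu^{\ast }u=u\left( z-u^{\ast }u\right) $, valid for any scalar $z$, which upon inverting both factors yields, for $z$ outside the two spectra,
\[
\left( z-uu^{\ast }\right) ^{-1}u=u\left( z-u^{\ast }u\right) ^{-1}.
\]
Since $u$ is invertible we have $\sigma \left( uu^{\ast }\right) =\sigma \left( u^{\ast }u\right) \subset \left( 0,\infty \right) $, so a single curve $\gamma $ in the right half-plane encloses both spectra and serves both functional calculi. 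Substituting the resolvent relation into the integral defining the real power,
\[
\left( uu^{\ast }\right) ^{\mu }u=\frac{1}{2\pi i}\int_{\gamma }z^{\mu }\left( z-uu^{\ast }\right) ^{-1}u\,dz=\frac{1}{2\pi i}\int_{\gamma }z^{\mu }u\left( z-u^{\ast }u\right) ^{-1}dz=u\left( u^{\ast }u\right) ^{\mu },
\]
which is the asserted intertwining.

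Finally I would assemble the pieces. Taking $\mu =\lambda -1$ and using property (ix) to write $\left( uu^{\ast }\right) ^{\lambda }=\left( uu^{\ast }\right) ^{\lambda -1}\left( uu^{\ast }\right) $, the intertwining gives
\[
\left( uu^{\ast }\right) ^{\lambda }=\left[ \left( uu^{\ast }\right) ^{\lambda -1}u\right] u^{\ast }=\left[ u\left( u^{\ast }u\right) ^{\lambda -1}\right] u^{\ast }=u\left( u^{\ast }u\right) ^{\lambda -1}u^{\ast },
\]
and restoring $u=dc^{1/2}$, $u^{\ast }=c^{1/2}d$, $u^{\ast }u=c^{1/2}d^{2}c^{1/2}$ recovers \eqref{e.1.1} exactly. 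The step I expect to demand the most care is the passage from the scalar resolvent identity to the identity for real powers: one must confirm that $\sigma \left( uu^{\ast }\right) =\sigma \left( u^{\ast }u\right) $ so that one contour governs both calculi, and justify carrying the bounded factor $u$ through the contour integral. Once the intertwining is secured, the remainder is purely algebraic.
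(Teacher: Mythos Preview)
Your argument is correct. The substitution $u=dc^{1/2}$ cleanly reduces the claim to $\left(uu^{\ast}\right)^{\lambda}=u\left(u^{\ast}u\right)^{\lambda-1}u^{\ast}$, and your derivation of the intertwining relation $\left(uu^{\ast}\right)^{\mu}u=u\left(u^{\ast}u\right)^{\mu}$ via the resolvent identity and the contour-integral definition of real powers is sound. The spectral equality $\sigma\left(uu^{\ast}\right)=\sigma\left(u^{\ast}u\right)$ that you flag is indeed available: for any $a,b$ in a Banach algebra one has $\sigma\left(ab\right)\cup\{0\}=\sigma\left(ba\right)\cup\{0\}$, and since $u$ is invertible, $0$ lies in neither spectrum. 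Pulling the fixed factor $u$ through the Riemann integral is also unproblematic, since multiplication in $A$ is continuous.

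Your route, however, is genuinely different from the one the paper takes. The paper quotes Lemma~\ref{l.1.1} from \cite{TU} without proof, but it does prove the slight generalization in Lemma~\ref{l.1.1.a} (where $d$ is merely invertible rather than positive), and the argument there---explicitly modeled on \cite{TU}---proceeds by a dyadic verification: one checks by direct computation that the two sides agree at $\lambda=1$, then shows by repeated squaring and induction that they agree at every $\lambda=2^{-n}$, and finally invokes analyticity in $\lambda$ together with the fact that $2^{-n}\to 0$ to conclude equality for all real $\lambda$. Your approach trades this somewhat laborious induction for a single resolvent identity and one contour integral; it is shorter, more conceptual, and makes the structural reason for the identity (the standard $uu^{\ast}$ versus $u^{\ast}u$ intertwining) transparent. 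The paper's approach, by contrast, avoids any discussion of spectra or contours beyond what is needed to define the powers, working entirely with algebraic manipulations and the uniqueness theorem for analytic functions.
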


We can prove the following fact:

\begin{proposition}
\label{p.1.1}For any $0<a,$ $b\in A$ we have%
\begin{equation}
b\sharp _{1-\nu }a=a\sharp _{\nu }b  \label{e.1.2}
\end{equation}%
for any real number $\nu .$
\end{proposition}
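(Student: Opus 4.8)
The plan is to reduce both sides to a common expression by applying the Tanahashi--Uchiyama identity (Lemma \ref{l.1.1}) and then matching exponents via the inverse-power rule in (x).

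First I would expand the right-hand side. By the definition (\ref{G}), $a\sharp _{\nu }b=a^{1/2}\left( a^{-1/2}ba^{-1/2}\right) ^{\nu }a^{1/2}$. I would apply Lemma \ref{l.1.1} to the inner factor $\left( a^{-1/2}ba^{-1/2}\right) ^{\nu }$ with the choice $d=a^{-1/2}$, $c=b$ and $\lambda =\nu$; here $c=b>0$ and $d=a^{-1/2}>0$ by (viii), so the hypotheses of the lemma are met. Since $d^{2}=a^{-1}$ and $c^{1/2}=b^{1/2}$, the identity gives $\left( a^{-1/2}ba^{-1/2}\right) ^{\nu }=a^{-1/2}b^{1/2}\left( b^{1/2}a^{-1}b^{1/2}\right) ^{\nu -1}b^{1/2}a^{-1/2}$. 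Conjugating by $a^{1/2}$ on both sides and cancelling $a^{1/2}a^{-1/2}=1$ collapses the outer factors, so that $a\sharp _{\nu }b=b^{1/2}\left( b^{1/2}a^{-1}b^{1/2}\right) ^{\nu -1}b^{1/2}$.

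Next I would expand the left-hand side directly from (\ref{G}): $b\sharp _{1-\nu }a=b^{1/2}\left( b^{-1/2}ab^{-1/2}\right) ^{1-\nu }b^{1/2}$. The only remaining task is to reconcile the two inner factors. Both $b^{-1/2}ab^{-1/2}$ and $b^{1/2}a^{-1}b^{1/2}$ are positive, being congruences of $a>0$ and $a^{-1}>0$ by an invertible element (positivity of such congruences follows from the Shirali--Ford property (SF) together with (ii)), and they are mutually inverse, since the product rule for inverses gives $\left( b^{-1/2}ab^{-1/2}\right) ^{-1}=b^{1/2}a^{-1}b^{1/2}$. Hence property (x), applied to the positive element $b^{1/2}a^{-1}b^{1/2}$, yields $\left( b^{-1/2}ab^{-1/2}\right) ^{1-\nu }=\left( b^{1/2}a^{-1}b^{1/2}\right) ^{-(1-\nu )}=\left( b^{1/2}a^{-1}b^{1/2}\right) ^{\nu -1}$. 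Substituting this into the expansion of $b\sharp _{1-\nu }a$ reproduces exactly the reduced form of $a\sharp _{\nu }b$ found above, which establishes (\ref{e.1.2}).

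I expect the main obstacle to be purely organizational rather than conceptual: namely, fixing the substitution $d=a^{-1/2}$, $c=b$ so that the half- and squared-powers land in the correct slots of Lemma \ref{l.1.1}, and verifying the exponent arithmetic $\nu -1=-(1-\nu )$ so that the inversion supplied by (x) aligns the two inner factors. There is no genuine analytic difficulty, because every element to which a real power is applied has been checked to be positive, so all of the power rules (viii)--(xi) may be invoked verbatim.
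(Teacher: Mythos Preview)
Your proof is correct and follows essentially the same route as the paper's: both arguments apply Lemma~\ref{l.1.1} once and then invoke the inverse-power rule (x) to reconcile the resulting exponents. The only cosmetic difference is that the paper substitutes $d=b^{-1/2}$, $c=a$ (arriving at $a\sharp _{1-\nu }b=b\sharp _{\nu }a$ and then swapping $a\leftrightarrow b$), whereas your choice $d=a^{-1/2}$, $c=b$ lands on the desired identity directly without the final swap.
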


\begin{proof}
We take in (\ref{e.1.1}) $d=b^{-1/2}$ and $c=a$ to get 
\begin{equation*}
\left( b^{-1/2}ab^{-1/2}\right) ^{\lambda }=b^{-1/2}a^{1/2}\left(
a^{1/2}b^{-1}a^{1/2}\right) ^{\lambda -1}a^{1/2}b^{-1/2}.
\end{equation*}%
If we multiply both sides of this equality by $b^{1/2}$ we get 
\begin{equation}
b^{1/2}\left( b^{-1/2}ab^{-1/2}\right) ^{\lambda }b^{1/2}=a^{1/2}\left(
a^{1/2}b^{-1}a^{1/2}\right) ^{\lambda -1}a^{1/2}.  \label{e.1.3}
\end{equation}%
Since%
\begin{equation*}
\left( a^{1/2}b^{-1}a^{1/2}\right) ^{\lambda -1}=\left[ \left(
a^{1/2}b^{-1}a^{1/2}\right) ^{-1}\right] ^{1-\lambda }=\left(
a^{-1/2}ba^{-1/2}\right) ^{1-\lambda }
\end{equation*}%
then by (\ref{e.1.3}) we get%
\begin{equation*}
a\sharp _{1-\nu }b=b\sharp _{\nu }a.
\end{equation*}%
By swapping in this equality $a$ with $b$ we get the desired result (\ref%
{e.1.2}).
\end{proof}

In this paper we introduce the \textit{quadratic weighted geometric mean}
for invertible elements $x,$ $y$ in a Hermitian unital Banach $\ast $%
-algebra and real number $\nu $. We show that it can be represented in terms
of $\sharp _{\nu },$ which is the usual geometric mean and provide some
inequalities for this mean under various assumptions for the elements
involved.

\section{Quadratic Weighted Geometric Mean}

In what follows we assume that $A$ is a Hermitian unital Banach $\ast $%
-algebra.

We observe that if $x\in \limfunc{Inv}\left( A\right) ,$ then $x^{\ast }\in 
\limfunc{Inv}\left( A\right) ,$ which implies that $x^{\ast }x\in \limfunc{%
Inv}\left( A\right) $. Therefore by Shirali-Ford theorem we have $x^{\ast
}x>0.$ If we define the modulus of the element $c\in A$ by $\left\vert
c\right\vert :=\left( c^{\ast }c\right) ^{1/2}$ then for $c\in \limfunc{Inv}%
\left( A\right) $ we have $\left\vert c\right\vert ^{2}>0$ and by (viii), $%
\left\vert c\right\vert >0.$ If $c>0,$ then by (viii) we have $\left\vert
c\right\vert =c.$

For $x,$ $y\in \limfunc{Inv}\left( A\right) $ we consider the element 
\begin{equation}
d:=\left( x^{\ast }\right) ^{-1}y^{\ast }yx^{-1}=\left( yx^{-1}\right)
^{\ast }yx^{-1}=\left\vert yx^{-1}\right\vert ^{2}.  \label{e.2.1}
\end{equation}%
Since $yx^{-1}\in \limfunc{Inv}\left( A\right) $ then $d>0,$ $d\in \limfunc{%
Inv}\left( A\right) ,$ $d^{-1}=\left\vert yx^{-1}\right\vert ^{-2},$ and
also 
\begin{equation}
d^{-1}=\left( \left( x^{\ast }\right) ^{-1}y^{\ast }yx^{-1}\right)
^{-1}=xy^{-1}\left( y^{-1}\right) ^{\ast }x^{\ast }=\left\vert \left(
y^{-1}\right) ^{\ast }x^{\ast }\right\vert ^{2}.  \label{e.2.2}
\end{equation}%
For $\nu \in \mathbb{R}$, by using the property (viii) we get that $d^{\nu
}=\left\vert yx^{-1}\right\vert ^{2\nu }>0$ and $d^{\nu /2}=\left\vert
yx^{-1}\right\vert ^{\nu }>0$. Since 
\begin{equation*}
x^{\ast }d^{\nu }x=x^{\ast }\left\vert yx^{-1}\right\vert ^{2\nu
}x=\left\vert \left\vert yx^{-1}\right\vert ^{\nu }x\right\vert ^{2}
\end{equation*}%
and $\left\vert yx^{-1}\right\vert ^{\nu }x\in \limfunc{Inv}\left( A\right)
, $ it follows that $x^{\ast }d^{\nu }x>0.$

We introduce the \textit{quadratic weighted mean} of $\left( x,y\right) $
with $x,$ $y\in \limfunc{Inv}\left( A\right) $ and the real weight $\nu \in 
\mathbb{R}$, as the positive element denoted by $x\circledS _{\nu }y$ and
defined by%
\begin{equation}
x\circledS _{\nu }y:=x^{\ast }\left( \left( x^{\ast }\right) ^{-1}y^{\ast
}yx^{-1}\right) ^{\nu }x=x^{\ast }\left\vert yx^{-1}\right\vert ^{2\nu
}x=\left\vert \left\vert yx^{-1}\right\vert ^{\nu }x\right\vert ^{2}. 
\tag{S}  \label{Se}
\end{equation}%
When $\nu =1/2,$ we denote $x\circledS _{1/2}y$ by $x\circledS y$ and we
have 
\begin{equation*}
x\circledS y=x^{\ast }\left( \left( x^{\ast }\right) ^{-1}y^{\ast
}yx^{-1}\right) ^{1/2}x=x^{\ast }\left\vert yx^{-1}\right\vert x=\left\vert
\left\vert yx^{-1}\right\vert ^{1/2}x\right\vert ^{2}.
\end{equation*}

We can also introduce the $1/2$-\textit{quadratic weighted mean} of $\left(
x,y\right) $ with $x,$ $y\in \limfunc{Inv}\left( A\right) $ and the real
weight $\nu \in \mathbb{R}$ by 
\begin{equation}
x\circledS _{\nu }^{1/2}y:=\left( x\circledS _{\nu }y\right)
^{1/2}=\left\vert \left\vert yx^{-1}\right\vert ^{\nu }x\right\vert . 
\tag{$1/2$-S}  \label{Se1/2}
\end{equation}%
Correspondingly, when $\nu =1/2$ we denote $x\circledS ^{1/2}y$ and we have 
\begin{equation*}
x\circledS ^{1/2}y=\left\vert \left\vert yx^{-1}\right\vert
^{1/2}x\right\vert .
\end{equation*}

The following equalities hold:

\begin{proposition}
For any $x,$ $y\in \limfunc{Inv}\left( A\right) $ and $\nu \in \mathbb{R}$
we have%
\begin{equation}
\left( x\circledS _{\nu }y\right) ^{-1}=\left( x^{\ast }\right)
^{-1}\circledS _{\nu }\left( y^{\ast }\right) ^{-1}  \label{e.2.4}
\end{equation}%
and%
\begin{equation}
\left( x^{-1}\right) \circledS _{\nu }\left( y^{-1}\right) =\left( x^{\ast
}\circledS _{\nu }y^{\ast }\right) ^{-1}.  \label{e.2.5}
\end{equation}
\end{proposition}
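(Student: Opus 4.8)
The plan is to establish \eqref{e.2.4} by a direct computation from the defining formula \eqref{Se} and then obtain \eqref{e.2.5} as an immediate consequence by substituting $x\mapsto x^{\ast }$ and $y\mapsto y^{\ast }$.

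First I would write $x\circledS _{\nu }y=x^{\ast }d^{\nu }x$ with $d=\left( x^{\ast }\right) ^{-1}y^{\ast }yx^{-1}$ as in \eqref{e.2.1} and invert this product. Since $x,x^{\ast }\in \limfunc{Inv}\left( A\right) $ and $d^{\nu }>0$, the rule $\left( ABC\right) ^{-1}=C^{-1}B^{-1}A^{-1}$ gives $\left( x\circledS _{\nu }y\right) ^{-1}=x^{-1}\left( d^{\nu }\right) ^{-1}\left( x^{\ast }\right) ^{-1}$. By property (x) we have $\left( d^{\nu }\right) ^{-1}=\left( d^{-1}\right) ^{\nu }$, and the explicit form of $d^{-1}$ recorded in \eqref{e.2.2}, together with $\left( y^{-1}\right) ^{\ast }=\left( y^{\ast }\right) ^{-1}$, turns this into
\begin{equation*}
\left( x\circledS _{\nu }y\right) ^{-1}=x^{-1}\left( xy^{-1}\left( y^{\ast }\right) ^{-1}x^{\ast }\right) ^{\nu }\left( x^{\ast }\right) ^{-1}.
\end{equation*}

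Next I would compute the right-hand side of \eqref{e.2.4} straight from \eqref{Se} applied to the pair $\left( \left( x^{\ast }\right) ^{-1},\left( y^{\ast }\right) ^{-1}\right) $. Using the involution identities $\left( a^{-1}\right) ^{\ast }=\left( a^{\ast }\right) ^{-1}$ and $\left( a^{\ast }\right) ^{\ast }=a$, I would record the four ingredients of the definition: with $u=\left( x^{\ast }\right) ^{-1}$ and $v=\left( y^{\ast }\right) ^{-1}$ the outer factors become $u^{\ast }=x^{-1}$ and $u=\left( x^{\ast }\right) ^{-1}$, while the inner block $\left( u^{\ast }\right) ^{-1}v^{\ast }vu^{-1}$ collapses to $x\,y^{-1}\left( y^{\ast }\right) ^{-1}x^{\ast }$. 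Substituting these shows that $\left( x^{\ast }\right) ^{-1}\circledS _{\nu }\left( y^{\ast }\right) ^{-1}$ equals exactly the expression displayed above, which proves \eqref{e.2.4}. Finally, \eqref{e.2.5} would follow with no further computation: replacing $x$ by $x^{\ast }$ and $y$ by $y^{\ast }$ in \eqref{e.2.4} and using $\left( x^{\ast }\right) ^{\ast }=x$, $\left( y^{\ast }\right) ^{\ast }=y$ turns the left-hand side into $\left( x^{\ast }\circledS _{\nu }y^{\ast }\right) ^{-1}$ and the right-hand side into $x^{-1}\circledS _{\nu }y^{-1}$, which is precisely \eqref{e.2.5}.

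The computation is entirely mechanical; the only points demanding care are the bookkeeping of adjoints versus inverses in the product rule and the verification that property (x) may legitimately be invoked, that is, that the base $d$ is positive so that $\left( d^{\nu }\right) ^{-1}=\left( d^{-1}\right) ^{\nu }$ holds. Both are already guaranteed by the discussion preceding \eqref{Se}, where $d>0$ and $d^{-1}>0$ are established, so I expect no genuine obstacle beyond careful symbol manipulation.
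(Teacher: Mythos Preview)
Your proposal is correct and follows essentially the same approach as the paper: both compute each side of \eqref{e.2.4} directly from the definition \eqref{Se}, arriving at the common expression $x^{-1}\left( xy^{-1}\left( y^{\ast }\right) ^{-1}x^{\ast }\right) ^{\nu }\left( x^{\ast }\right) ^{-1}$, and then deduce \eqref{e.2.5} by a substitution in \eqref{e.2.4}. The only cosmetic difference is that the paper substitutes $x\mapsto x^{-1}$, $y\mapsto y^{-1}$ and then inverts, whereas you substitute $x\mapsto x^{\ast }$, $y\mapsto y^{\ast }$; both yield \eqref{e.2.5} in one line.
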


\begin{proof}
We observe that for any $x,$ $y\in \limfunc{Inv}\left( A\right) $ and $\nu
\in \mathbb{R}$ we have%
\begin{equation*}
\left( x\circledS _{\nu }y\right) ^{-1}=\left( x^{\ast }\left( \left(
x^{\ast }\right) ^{-1}y^{\ast }yx^{-1}\right) ^{\nu }x\right)
^{-1}=x^{-1}\left( xy^{-1}\left( y^{\ast }\right) ^{-1}x^{\ast }\right)
^{\nu }\left( x^{\ast }\right) ^{-1}
\end{equation*}%
and%
\begin{align*}
& \left( x^{\ast }\right) ^{-1}\circledS _{\nu }\left( y^{\ast }\right) ^{-1}
\\
& =\left( \left( x^{\ast }\right) ^{-1}\right) ^{\ast }\left( \left( \left(
\left( x^{\ast }\right) ^{-1}\right) ^{\ast }\right) ^{-1}\left( \left(
y^{\ast }\right) ^{-1}\right) ^{\ast }\left( y^{\ast }\right) ^{-1}\left(
\left( x^{\ast }\right) ^{-1}\right) ^{-1}\right) ^{\nu }\left( x^{\ast
}\right) ^{-1} \\
& =x^{-1}\left( xy^{-1}\left( y^{\ast }\right) ^{-1}x^{\ast }\right) ^{\nu
}\left( x^{\ast }\right) ^{-1},
\end{align*}%
which proves (\ref{e.2.4}).

If we replace in (\ref{e.2.4}) $x$ by $x^{-1}$ and $y$ by $y^{-1}$ we get%
\begin{equation*}
\left( \left( x^{-1}\right) \circledS _{\nu }\left( y^{-1}\right) \right)
^{-1}=x^{\ast }\circledS _{\nu }y^{\ast }
\end{equation*}%
and by taking the inverse in this equality we get (\ref{e.2.5}).
\end{proof}

If we take in (\ref{Se}) $x=a^{1/2}$ and $y=b^{1/2}$ with $a,$ $b>0$ then we
get 
\begin{equation*}
a^{1/2}\circledS _{\nu }b^{1/2}=a\sharp _{\nu }b
\end{equation*}%
for any $\nu \in \mathbb{R}$ that shows that the quadratic weighted mean can
be seen as an extension of the weighted geometric mean\textit{\ }for
positive elements considered in the introduction.

Let $x,$ $y\in \limfunc{Inv}\left( A\right) .$ If we take in the definition
of "$\sharp _{\nu }$" the elements $a=\left\vert x\right\vert ^{2}>0$ and $%
b=\left\vert y\right\vert ^{2}>0$ we also have for real $\nu $ 
\begin{equation*}
\left\vert x\right\vert ^{2}\sharp _{\nu }\left\vert y\right\vert
^{2}=\left\vert x\right\vert \left( \left\vert x\right\vert ^{-1}\left\vert
y\right\vert ^{2}\left\vert x\right\vert ^{-1}\right) ^{\upsilon }\left\vert
x\right\vert =\left\vert x\right\vert \left\vert \left\vert y\right\vert
\left\vert x\right\vert ^{-1}\right\vert ^{2\upsilon }\left\vert
x\right\vert =\left\vert \left\vert \left\vert y\right\vert \left\vert
x\right\vert ^{-1}\right\vert ^{\upsilon }\left\vert x\right\vert
\right\vert ^{2}.
\end{equation*}

It is then natural to ask how the positive elements $x\circledS _{\nu }y$
and $\left\vert x\right\vert ^{2}\sharp _{\nu }\left\vert y\right\vert ^{2}$
do compare, when $x,$ $y\in \limfunc{Inv}\left( A\right) $ and $\nu \in 
\mathbb{R}$ ?

We need the following lemma that provides a slight generalization of Lemma %
\ref{l.1.1}.

\begin{lemma}
\label{l.1.1.a}If $0<c,$ $d\in \limfunc{Inv}\left( A\right) $ and $\lambda $
is a real number, then%
\begin{equation}
\left( dcd^{\ast }\right) ^{\lambda }=dc^{1/2}\left( c^{1/2}\left\vert
d\right\vert ^{2}c^{1/2}\right) ^{\lambda -1}c^{1/2}d^{\ast }.
\label{e.2.5.1}
\end{equation}
\end{lemma}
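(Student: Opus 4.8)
The plan is to reduce (\ref{e.2.5.1}) to a clean identity about a single invertible element and then settle that identity with the holomorphic functional calculus. First I would set $u:=dc^{1/2}$, which lies in $\limfunc{Inv}\left( A\right) $ since $d\in \limfunc{Inv}\left( A\right) $ and $c>0$ forces $c^{1/2}>0$ (hence invertible) by property (viii); moreover $c^{1/2}$ is selfadjoint, so $u^{\ast }=c^{1/2}d^{\ast }$. A direct computation then gives $uu^{\ast }=dc^{1/2}c^{1/2}d^{\ast }=dcd^{\ast }$ and $u^{\ast }u=c^{1/2}d^{\ast }dc^{1/2}=c^{1/2}\left\vert d\right\vert ^{2}c^{1/2}$. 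Consequently the right-hand side of (\ref{e.2.5.1}) equals $u\left( u^{\ast }u\right) ^{\lambda -1}u^{\ast }$ and the left-hand side equals $\left( uu^{\ast }\right) ^{\lambda }$, so the lemma is equivalent to proving $\left( uu^{\ast }\right) ^{\lambda }=u\left( u^{\ast }u\right) ^{\lambda -1}u^{\ast }$ for $u\in \limfunc{Inv}\left( A\right) $. I note in passing that both $uu^{\ast }$ and $u^{\ast }u$ are positive and invertible (apply Shirali--Ford to $u^{\ast }$ and to $u$, together with invertibility), so every real power appearing is well defined by the construction in the introduction.

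The heart of the matter is the intertwining relation $u\left( u^{\ast }u\right) ^{\mu }u^{-1}=\left( uu^{\ast }\right) ^{\mu }$ for every real $\mu $. At the algebraic level this rests on $u\left( u^{\ast }u\right) u^{-1}=uu^{\ast }$, which exhibits $uu^{\ast }$ and $u^{\ast }u$ as similar, so they share the same spectrum, a compact subset of $\left( 0,\infty \right) $. I would therefore fix a single closed rectifiable contour $\gamma $ in $\{\func{Re}z>0\}$ enclosing this common spectrum and, starting from $\left( u^{\ast }u\right) ^{\mu }=\frac{1}{2\pi i}\int_{\gamma }z^{\mu }\left( z-u^{\ast }u\right) ^{-1}dz$, conjugate by $u$ under the integral sign. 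The pointwise identity $u\left( z-u^{\ast }u\right) ^{-1}u^{-1}=\left( u\left( z-u^{\ast }u\right) u^{-1}\right) ^{-1}=\left( z-uu^{\ast }\right) ^{-1}$ turns the integral for $u\left( u^{\ast }u\right) ^{\mu }u^{-1}$ into the defining integral for $\left( uu^{\ast }\right) ^{\mu }$, which is exactly the intertwining relation.

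Finally I would close the computation using $u^{\ast }=\left( u^{\ast }u\right) u^{-1}$ (immediate from $uu^{-1}=1$) together with property (ix): $u\left( u^{\ast }u\right) ^{\lambda -1}u^{\ast }=u\left( u^{\ast }u\right) ^{\lambda -1}\left( u^{\ast }u\right) u^{-1}=u\left( u^{\ast }u\right) ^{\lambda }u^{-1}=\left( uu^{\ast }\right) ^{\lambda }$, where the last equality is the intertwining relation with $\mu =\lambda $. Substituting back $u=dc^{1/2}$ recovers (\ref{e.2.5.1}).

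The step I expect to be the main obstacle is the functional-calculus manipulation of the middle paragraph: one must verify that the \emph{same} contour $\gamma $ serves for both $u^{\ast }u$ and $uu^{\ast }$ (guaranteed by the equality of spectra arising from similarity) and that conjugation by the fixed element $u$ commutes with the contour integral. Everything else is elementary algebra in $A$. Should one prefer to stay closer to Lemma \ref{l.1.1}, an alternative is to verify the polynomial case $\left( uu^{\ast }\right) ^{n}=u\left( u^{\ast }u\right) ^{n-1}u^{\ast }$ by telescoping and then extend to real exponents through the analytic functional calculus, but the intertwining route above appears shorter and more transparent.
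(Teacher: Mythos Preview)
Your proof is correct and takes a genuinely different route from the paper's. The paper defines $F(\lambda)=(dcd^{\ast})^{\lambda}$ and $G(\lambda)=dc^{1/2}(c^{1/2}|d|^{2}c^{1/2})^{\lambda-1}c^{1/2}d^{\ast}$, verifies by direct (and somewhat laborious) computation that $G^{2^{n}}(1/2^{n})=F^{2^{n}}(1/2^{n})$ for all $n\ge 0$, extracts $G(1/2^{n})=F(1/2^{n})$ via $(a^{2})^{1/2}=a$, and then appeals to the identity theorem for analytic functions on the real line to conclude $F\equiv G$. Your argument instead reduces everything to the clean statement $(uu^{\ast})^{\lambda}=u(u^{\ast}u)^{\lambda-1}u^{\ast}$ for $u=dc^{1/2}\in\limfunc{Inv}(A)$, and proves this via the similarity $u(u^{\ast}u)u^{-1}=uu^{\ast}$: equal spectra permit a common contour, and the resolvent identity $u(z-u^{\ast}u)^{-1}u^{-1}=(z-uu^{\ast})^{-1}$ pushes the conjugation through the Riesz integral to give $u(u^{\ast}u)^{\mu}u^{-1}=(uu^{\ast})^{\mu}$ for every real $\mu$.

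The trade-offs are clear. The paper's dyadic approach stays close to the original Tanahashi--Uchiyama argument for Lemma~\ref{l.1.1} and uses nothing beyond the definition of real powers and the uniqueness of square roots, at the cost of opaque computations. Your intertwining approach is shorter, isolates the structural reason the identity holds (functional calculus respects similarity), and yields the identity $(uu^{\ast})^{\lambda}=u(u^{\ast}u)^{\lambda-1}u^{\ast}$ for \emph{any} invertible $u$ as a by-product; this is essentially the Banach $\ast$-algebra substitute for Furuta's polar-decomposition proof alluded to in Remark~\ref{r.3.2}. The one point to make explicit, as you note, is that left and right multiplication by the fixed element $u$ (respectively $u^{-1}$) are bounded linear maps on $A$ and hence pass through the contour integral---this is routine, but worth a sentence in a final write-up.
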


\begin{proof}
We provide an argument along the lines in the proof of Lemma 7 from \cite{TU}%
.

Consider the functions $F\left( \lambda \right) :=\left( dcd^{\ast }\right)
^{\lambda }$ and $G\left( \lambda \right) :=dc^{1/2}\left( c^{1/2}\left\vert
d\right\vert ^{2}c^{1/2}\right) ^{\lambda -1}c^{1/2}d^{\ast }$ defined for $%
\lambda \in \mathbb{R}$. It is obvious that $F\left( 1\right) =G\left(
1\right) .$

We have 
\begin{align*}
G^{2}\left( \frac{1}{2}\right) & =dc^{1/2}\left( c^{1/2}\left\vert
d\right\vert ^{2}c^{1/2}\right) ^{-1/2}c^{1/2}d^{\ast }dc^{1/2}\left(
c^{1/2}\left\vert d\right\vert ^{2}c^{1/2}\right) ^{-1/2}c^{1/2}d^{\ast } \\
& =dc^{1/2}\left( c^{1/2}\left\vert d\right\vert ^{2}c^{1/2}\right)
^{-1/2}c^{1/2}\left\vert d\right\vert ^{2}c^{1/2}\left( c^{1/2}\left\vert
d\right\vert ^{2}c^{1/2}\right) ^{-1/2}c^{1/2}d^{\ast } \\
& =dcd^{\ast }=F^{2}\left( \frac{1}{2}\right)
\end{align*}%
and 
\begin{align*}
G^{2^{2}}\left( \frac{1}{2^{2}}\right) & =\left( dc^{1/2}\left(
c^{1/2}\left\vert d\right\vert ^{2}c^{1/2}\right) ^{\frac{1-2^{2}}{2^{2}}%
}c^{1/2}d^{\ast }\right) ^{2^{2}} \\
& =dc^{1/2}\left( c^{1/2}\left\vert d\right\vert ^{2}c^{1/2}\right) ^{-\frac{%
3}{4}}c^{1/2}d^{\ast }dc^{1/2}\left( c^{1/2}\left\vert d\right\vert
^{2}c^{1/2}\right) ^{-\frac{3}{4}}c^{1/2}d^{\ast } \\
& dc^{1/2}\left( c^{1/2}\left\vert d\right\vert ^{2}c^{1/2}\right) ^{-\frac{3%
}{4}}c^{1/2}d^{\ast }dc^{1/2}\left( c^{1/2}\left\vert d\right\vert
^{2}c^{1/2}\right) ^{-\frac{3}{4}}c^{1/2}d^{\ast } \\
& =dc^{1/2}\left( c^{1/2}\left\vert d\right\vert ^{2}c^{1/2}\right) ^{-\frac{%
3}{4}}c^{1/2}\left\vert d\right\vert ^{2}c^{1/2}\left( c^{1/2}\left\vert
d\right\vert ^{2}c^{1/2}\right) ^{-\frac{3}{4}}c^{1/2}d^{\ast } \\
& dc^{1/2}\left( c^{1/2}\left\vert d\right\vert ^{2}c^{1/2}\right) ^{-\frac{3%
}{4}}c^{1/2}\left\vert d\right\vert ^{2}c^{1/2}\left( c^{1/2}\left\vert
d\right\vert ^{2}c^{1/2}\right) ^{-\frac{3}{4}}c^{1/2}d^{\ast } \\
& =dc^{1/2}\left( c^{1/2}\left\vert d\right\vert ^{2}c^{1/2}\right) ^{-\frac{%
1}{2}}c^{1/2}d^{\ast }dc^{1/2}\left( c^{1/2}\left\vert d\right\vert
^{2}c^{1/2}\right) ^{-\frac{1}{2}}c^{1/2}d^{\ast } \\
& =dc^{1/2}\left( c^{1/2}\left\vert d\right\vert ^{2}c^{1/2}\right) ^{-\frac{%
1}{2}}c^{1/2}\left\vert d\right\vert ^{2}c^{1/2}\left( c^{1/2}\left\vert
d\right\vert ^{2}c^{1/2}\right) ^{-\frac{1}{2}}c^{1/2}d^{\ast } \\
& =dcd^{\ast }=F^{2^{2}}\left( \frac{1}{2^{2}}\right) .
\end{align*}

By induction we can conclude that $G^{2^{n}}\left( \frac{1}{2^{n}}\right)
=F^{2^{n}}\left( \frac{1}{2^{n}}\right) $ for any natural number $n\geq 0.$
Since for any $a>0$ we have $\left( a^{2}\right) ^{1/2}=a,$ \cite[Lemma 6]%
{TU}, hence $G\left( \frac{1}{2^{n}}\right) =F\left( \frac{1}{2^{n}}\right) $
for any natural number $n\geq 0.$

Since $F\left( \lambda \right) $; $G\left( \lambda \right) $ are analytic on
the real line $\mathbb{R}$ and $\frac{1}{2^{n}}\rightarrow 0$ for $%
n\rightarrow 0$, we deduce that $F\left( \lambda \right) =G\left( \lambda
\right) $ for any $\lambda \in \mathbb{R}$.
\end{proof}

\begin{remark}
\label{r.3.2}The identity (\ref{e.2.5.1}) was proved by. T. Furuta in \cite%
{Fu} for positive operator $c$ and invertible operator $d$ in the Banach
algebra of all bonded linear operators on a Hilbert space by using the polar
decomposition of the invertible operator $dc^{1/2}$.
\end{remark}

\begin{theorem}
\label{t.2.0}If $x,$ $y\in \limfunc{Inv}\left( A\right) $ and $\lambda $ is
a real number, then%
\begin{equation}
x\circledS _{\nu }y=\left\vert x\right\vert ^{2}\sharp _{\nu }\left\vert
y\right\vert ^{2}  \label{e.2.5.2}
\end{equation}
\end{theorem}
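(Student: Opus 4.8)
The plan is to reduce both sides of (\ref{e.2.5.2}) to one and the same expression written purely in terms of the positive elements $\left\vert x\right\vert $ and $\left\vert y\right\vert $, handling the left-hand side with the generalized identity of Lemma \ref{l.1.1.a} and the right-hand side with the original Lemma \ref{l.1.1}.

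First I would unfold the definition (\ref{Se}) and apply Lemma \ref{l.1.1.a} with $d=\left( x^{\ast }\right) ^{-1}$, $c=y^{\ast }y=\left\vert y\right\vert ^{2}>0$ and $\lambda =\nu $. The point is that here $d^{\ast }=x^{-1}$ and $\left\vert d\right\vert ^{2}=d^{\ast }d=x^{-1}\left( x^{\ast }\right) ^{-1}=\left( x^{\ast }x\right) ^{-1}=\left\vert x\right\vert ^{-2}$, while $c^{1/2}=\left\vert y\right\vert $ by property (viii), so that $dcd^{\ast }=\left( x^{\ast }\right) ^{-1}y^{\ast }yx^{-1}$ is exactly the base occurring in (\ref{Se}). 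Lemma \ref{l.1.1.a} then gives
\begin{equation*}
\left( \left( x^{\ast }\right) ^{-1}y^{\ast }yx^{-1}\right) ^{\nu }=\left(
x^{\ast }\right) ^{-1}\left\vert y\right\vert \left( \left\vert y\right\vert
\left\vert x\right\vert ^{-2}\left\vert y\right\vert \right) ^{\nu
-1}\left\vert y\right\vert x^{-1},
\end{equation*}
and multiplying on the left by $x^{\ast }$ and on the right by $x$ cancels the outer factors, leaving
\begin{equation*}
x\circledS _{\nu }y=\left\vert y\right\vert \left( \left\vert y\right\vert
\left\vert x\right\vert ^{-2}\left\vert y\right\vert \right) ^{\nu
-1}\left\vert y\right\vert .
\end{equation*}

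Next I would treat the right-hand side. Using the expansion of $\left\vert x\right\vert ^{2}\sharp _{\nu }\left\vert y\right\vert ^{2}$ recorded just before Lemma \ref{l.1.1.a}, it suffices to rewrite $\left( \left\vert x\right\vert ^{-1}\left\vert y\right\vert ^{2}\left\vert x\right\vert ^{-1}\right) ^{\nu }$. Applying the original Lemma \ref{l.1.1} with the positive elements $d=\left\vert x\right\vert ^{-1}$ and $c=\left\vert y\right\vert ^{2}$ (so $c^{1/2}=\left\vert y\right\vert $, $d^{2}=\left\vert x\right\vert ^{-2}$) and $\lambda =\nu $ yields
\begin{equation*}
\left( \left\vert x\right\vert ^{-1}\left\vert y\right\vert ^{2}\left\vert
x\right\vert ^{-1}\right) ^{\nu }=\left\vert x\right\vert ^{-1}\left\vert
y\right\vert \left( \left\vert y\right\vert \left\vert x\right\vert
^{-2}\left\vert y\right\vert \right) ^{\nu -1}\left\vert y\right\vert
\left\vert x\right\vert ^{-1}.
\end{equation*}
Inserting this into $\left\vert x\right\vert \left( \left\vert x\right\vert ^{-1}\left\vert y\right\vert ^{2}\left\vert x\right\vert ^{-1}\right) ^{\nu }\left\vert x\right\vert $ and cancelling the outer $\left\vert x\right\vert ^{\pm 1}$ produces exactly $\left\vert y\right\vert \left( \left\vert y\right\vert \left\vert x\right\vert ^{-2}\left\vert y\right\vert \right) ^{\nu -1}\left\vert y\right\vert $, which matches the expression obtained for $x\circledS _{\nu }y$ and thus establishes (\ref{e.2.5.2}).

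I expect the only delicate point to be the very first substitution: since $x$ need not be selfadjoint one has $d^{\ast }=x^{-1}\neq d$, and it is precisely this asymmetry that forces the use of the generalized identity (\ref{e.2.5.1}) in place of (\ref{e.1.1}). Once the reading $\left\vert d\right\vert ^{2}=\left\vert x\right\vert ^{-2}$ is made, both computations are formal cancellations, and no convergence or spectral subtleties arise beyond those already absorbed into Lemmas \ref{l.1.1} and \ref{l.1.1.a}.
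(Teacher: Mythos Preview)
Your proof is correct and follows essentially the same route as the paper: both arguments apply Lemma~\ref{l.1.1.a} with $d=(x^{\ast})^{-1}$ and $c=\left\vert y\right\vert ^{2}$ to reduce $x\circledS _{\nu }y$ to $\left\vert y\right\vert \left( \left\vert y\right\vert \left\vert x\right\vert ^{-2}\left\vert y\right\vert \right) ^{\nu -1}\left\vert y\right\vert$. The only cosmetic difference is that the paper then recognizes this expression as $\left\vert y\right\vert ^{2}\sharp _{1-\nu }\left\vert x\right\vert ^{2}$ and invokes Proposition~\ref{p.1.1} for the swap, whereas you apply Lemma~\ref{l.1.1} directly to $\left\vert x\right\vert ^{2}\sharp _{\nu }\left\vert y\right\vert ^{2}$ to reach the same intermediate form; since Proposition~\ref{p.1.1} is itself proved by exactly that application of Lemma~\ref{l.1.1}, the two arguments are the same computation packaged differently.
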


\begin{proof}
If we take $d=\left( x^{\ast }\right) ^{-1}$ and $c=\left\vert y\right\vert
^{2}>0$ in (\ref{e.2.5.1}), then we get%
\begin{align*}
\left( \left( x^{\ast }\right) ^{-1}\left\vert y\right\vert
^{2}x^{-1}\right) ^{\lambda }& =\left( x^{\ast }\right) ^{-1}\left\vert
y\right\vert \left( \left\vert y\right\vert \left\vert \left( x^{\ast
}\right) ^{-1}\right\vert ^{2}\left\vert y\right\vert \right) ^{\lambda
-1}\left\vert y\right\vert x^{-1} \\
& =\left( x^{\ast }\right) ^{-1}\left\vert y\right\vert \left( \left\vert
y\right\vert \left( \left( x^{\ast }\right) ^{-1}\right) ^{\ast }\left(
x^{\ast }\right) ^{-1}\left\vert y\right\vert \right) ^{\lambda
-1}\left\vert y\right\vert x^{-1} \\
& =\left( x^{\ast }\right) ^{-1}\left\vert y\right\vert \left( \left\vert
y\right\vert x^{-1}\left( x^{\ast }\right) ^{-1}\left\vert y\right\vert
\right) ^{\lambda -1}\left\vert y\right\vert x^{-1} \\
& =\left( x^{\ast }\right) ^{-1}\left\vert y\right\vert \left( \left\vert
y\right\vert \left( x^{\ast }x\right) ^{-1}\left\vert y\right\vert \right)
^{\lambda -1}\left\vert y\right\vert x^{-1} \\
& =\left( x^{\ast }\right) ^{-1}\left\vert y\right\vert \left( \left\vert
y\right\vert \left\vert x\right\vert ^{-2}\left\vert y\right\vert \right)
^{\lambda -1}\left\vert y\right\vert x^{-1}.
\end{align*}%
If we multiply this equality at left by $x^{\ast }$ and at right by $x$, we
get%
\begin{align*}
x^{\ast }\left( \left( x^{\ast }\right) ^{-1}\left\vert y\right\vert
^{2}x^{-1}\right) ^{\lambda }x& =\left\vert y\right\vert \left( \left\vert
y\right\vert \left\vert x\right\vert ^{-2}\left\vert y\right\vert \right)
^{\lambda -1}\left\vert y\right\vert \\
& =\left\vert y\right\vert \left( \left\vert y\right\vert ^{-1}\left\vert
x\right\vert ^{2}\left\vert y\right\vert ^{-1}\right) ^{1-\lambda
}\left\vert y\right\vert ,
\end{align*}%
which means that 
\begin{equation}
x\circledS _{\nu }y=\left\vert y\right\vert ^{2}\sharp _{1-\nu }\left\vert
x\right\vert ^{2}.  \label{e.2.5.3}
\end{equation}%
By (\ref{e.1.2}) we have for $a=\left\vert x\right\vert ^{2}>0$ and $%
b=\left\vert y\right\vert ^{2}$ that%
\begin{equation}
\left\vert y\right\vert ^{2}\sharp _{1-\nu }\left\vert x\right\vert
^{2}=\left\vert x\right\vert ^{2}\sharp _{\nu }\left\vert y\right\vert ^{2}.
\label{e.2.5.4}
\end{equation}%
Utilising (\ref{e.2.5.3}) and (\ref{e.2.5.4}) we deduce (\ref{e.2.5.2}).
\end{proof}

Now, assume that $f\left( z\right) $ is analytic in the right half open
plane $\{\func{Re}z>0\}$ and for the interval $I\subset \left( 0,\infty
\right) $ assume that $f\left( z\right) \geq 0$ for any $z\in I.$ If $u\in A$
such that $\sigma \left( u\right) \subset I,$ then by (SMT) we have%
\begin{equation*}
\sigma \left( f\left( u\right) \right) =f\left( \sigma \left( u\right)
\right) \subset f\left( I\right) \subset \left[ 0,\infty \right)
\end{equation*}%
meaning that $f\left( u\right) \geq 0$ in the order of $A.$

Therefore, we can state the following fact that will be used to establish
various inequalities in $A.$

\begin{lemma}
\label{l.2.1}Let $f\left( z\right) $ and $g\left( z\right) $ be analytic in
the right half open plane $\{\func{Re}z>0\}$ and for the interval $I\subset
\left( 0,\infty \right) $ assume that $f\left( z\right) \geq g\left(
z\right) $ for any $z\in I.$ Then for any $u\in A$ with $\sigma \left(
u\right) \subset I$ we have $f\left( u\right) \geq g\left( u\right) $ in the
order of $A.$
\end{lemma}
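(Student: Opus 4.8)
The plan is to reduce this two-function inequality to the single-function observation recorded in the paragraph immediately preceding the lemma. First I would set $h\left( z\right) :=f\left( z\right) -g\left( z\right) $, which is again analytic on the right half open plane $\{\func{Re}z>0\}$, being a difference of functions analytic there. The hypothesis $f\left( z\right) \geq g\left( z\right) $ for all $z\in I$ becomes $h\left( z\right) \geq 0$ for all $z\in I$, so $h$ meets exactly the conditions of that single-function fact.

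The key step is to confirm that the analytic functional calculus is linear, so that $h\left( u\right) =f\left( u\right) -g\left( u\right) $. This is immediate from the integral representation: fixing a closed rectifiable curve $\gamma $ in $\{\func{Re}z>0\}$ with $\sigma \left( u\right) \subset \limfunc{ins}\left( \gamma \right) $ --- the same $\gamma $ may be used for $f$, $g$ and $h$ since the value is independent of the choice of curve --- we have
\begin{equation*}
h\left( u\right) =\frac{1}{2\pi i}\int_{\gamma }\left( f\left( z\right)
-g\left( z\right) \right) \left( z-u\right) ^{-1}dz=f\left( u\right)
-g\left( u\right)
\end{equation*}
by linearity of the integral. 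Applying the preceding observation to $h$, since $\sigma \left( u\right) \subset I$ the Spectral Mapping Theorem gives $\sigma \left( h\left( u\right) \right) =h\left( \sigma \left( u\right) \right) \subset h\left( I\right) \subset \left[ 0,\infty \right) $, so that $h\left( u\right) \geq 0$ in the order of $A$. Combining this with $h\left( u\right) =f\left( u\right) -g\left( u\right) $ yields $f\left( u\right) -g\left( u\right) \geq 0$, i.e.\ $f\left( u\right) \geq g\left( u\right) $, as desired.

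I do not expect any real obstacle here; the whole argument is a one-line application of linearity followed by a citation of the single-function case. The only point deserving a little care is that ``$\geq 0$ in the order of $A$'' also carries a self-adjointness requirement, so one should be clear that $h\left( u\right) $ is the very nonnegative element furnished by the preceding discussion rather than merely an element whose spectrum happens to lie in $\left[ 0,\infty \right) $.
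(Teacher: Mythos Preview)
Your proposal is correct and matches the paper's approach exactly: the paper does not give a separate proof of this lemma at all, but simply states it as an immediate consequence (``Therefore, we can state the following fact'') of the single-function observation in the paragraph just before it. You have merely made explicit the one-line reduction $h=f-g$ and the linearity of the Riesz functional calculus that the paper leaves implicit; your caution about the self-adjointness component of ``$\geq 0$'' is a fair remark on a point the paper glosses over.
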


We have the following inequalities between means:

\begin{theorem}
\label{t.2.1}For any $x,$ $y\in \limfunc{Inv}\left( A\right) $ and $\nu \in %
\left[ 0,1\right] $ we have 
\begin{equation}
\left\vert x\right\vert ^{2}\nabla _{\nu }\left\vert y\right\vert ^{2}\geq
x\circledS _{\nu }y\geq \left\vert x\right\vert ^{2}!_{\nu }\left\vert
y\right\vert ^{2}.  \label{e.2.6}
\end{equation}
\end{theorem}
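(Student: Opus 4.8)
The plan is to recognize that this theorem is an immediate consequence of the representation established in Theorem \ref{t.2.0} together with the already-quoted arithmetic--geometric--harmonic inequality (\ref{HGA}). So the real work has all been done earlier, and what remains is to substitute correctly.

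First I would invoke Theorem \ref{t.2.0}, which gives the identity
\begin{equation*}
x\circledS _{\nu }y=\left\vert x\right\vert ^{2}\sharp _{\nu }\left\vert y\right\vert ^{2}
\end{equation*}
for any $x,y\in \limfunc{Inv}\left( A\right) $ and any real $\nu $; in particular this holds for $\nu \in \left[ 0,1\right] $. Thus the middle term of the desired chain (\ref{e.2.6}) is precisely the weighted geometric mean of the two positive elements $\left\vert x\right\vert ^{2}$ and $\left\vert y\right\vert ^{2}$.

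Next I would record that, as observed at the start of Section 2, the invertibility of $x$ and $y$ forces $\left\vert x\right\vert ^{2}>0$ and $\left\vert y\right\vert ^{2}>0$ (via the Shirali--Ford theorem and property (viii)). Setting $a:=\left\vert x\right\vert ^{2}>0$ and $b:=\left\vert y\right\vert ^{2}>0$, the three expressions appearing in (\ref{e.2.6}) are exactly $a\nabla _{\nu }b$, $a\sharp _{\nu }b$ and $a!_{\nu }b$ respectively, where the geometric term has been rewritten using Theorem \ref{t.2.0}.

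Finally I would apply Feng's inequality (\ref{HGA}), which asserts that $a\nabla _{\nu }b\geq a\sharp _{\nu }b\geq a!_{\nu }b$ for all positive $a,b$ and all $\nu \in \left[ 0,1\right] $, to these particular $a$ and $b$; combined with the substitution $a\sharp _{\nu }b=x\circledS _{\nu }y$ this yields (\ref{e.2.6}) at once. There is no genuine obstacle here: the proof is a two-line reduction, and the only point requiring a modicum of care is confirming that the hypotheses of (\ref{HGA}) are met, namely that $\left\vert x\right\vert ^{2}$ and $\left\vert y\right\vert ^{2}$ are strictly positive and that $\nu $ lies in $\left[ 0,1\right] $, both of which are guaranteed by the standing assumptions.
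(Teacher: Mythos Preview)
Your proof is correct and coincides exactly with the paper's first proof, which combines the representation of Theorem \ref{t.2.0} with Feng's inequality (\ref{HGA}). The paper additionally records a second, self-contained argument via Lemma \ref{l.2.1} that bypasses Theorem \ref{t.2.0}, but your reduction is the one the author gives as the primary proof.
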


\begin{proof}
1. Follows by the inequality (\ref{HGA}) and representation (\ref{e.2.5.2})

2. A direct proof using Lemma \ref{l.2.1} is as follows.

For $t>0$ and $\nu \in \left[ 0,1\right] $ we have the scalar arithmetic
mean-geometric mean- harmonic mean inequality 
\begin{equation}
1-\nu +\nu t\geq t^{\nu }\geq \left( 1-\nu +\nu t^{-1}\right) ^{-1}.
\label{e.2.6.a}
\end{equation}%
Consider the functions $f\left( z\right) :=1-\nu +\nu z$, $g\left( z\right)
:=z^{\nu }$ and $h\left( z\right) =\left( 1-\nu +\nu z^{-1}\right) ^{-1}$%
where $z^{\nu }$ is the principal of the power function. Then $f(z)$, $%
g\left( z\right) $ and $h\left( z\right) $ are analytic in the right half
open plane $\{\func{Re}z>0\}$ of the complex plane and by (\ref{e.2.6.a}) we
have $f(z)\geq g\left( z\right) \geq h\left( z\right) $ for any $z>0.$

If $0<u\in \limfunc{Inv}\left( A\right) $ and $\nu \in \left[ 0,1\right] ,$
then by Lemma \ref{l.2.1} we get 
\begin{equation*}
1-\nu +\nu u\geq u^{\nu }\geq \left( 1-\nu +\nu u^{-1}\right) ^{-1}.
\end{equation*}

If $x,$ $y\in \limfunc{Inv}\left( A\right) $, then by taking $u=\left\vert
yx^{-1}\right\vert ^{2}\in \limfunc{Inv}\left( A\right) $ we get%
\begin{equation}
1-\nu +\nu \left\vert yx^{-1}\right\vert ^{2}\geq \left\vert
yx^{-1}\right\vert ^{2\nu }\geq \left( 1-\nu +\nu \left\vert
yx^{-1}\right\vert ^{-2}\right) ^{-1}  \label{e.2.7}
\end{equation}%
for any $\nu \in \left[ 0,1\right] .$

If $a>0$ and $c\in \limfunc{Inv}\left( A\right) $ then obviously $c^{\ast
}ac=\left\vert a^{1/2}c\right\vert ^{2}>0.$ This implies that, if $a\geq
b>0, $ then $c^{\ast }ac\geq c^{\ast }bc>0.$

Therefore, if we multiply the inequality (\ref{e.2.7}) at left with $x^{\ast
}$ and at right with $x,$ then we get%
\begin{equation}
x^{\ast }\left( 1-\nu +\nu \left\vert yx^{-1}\right\vert ^{2}\right) x\geq
x^{\ast }\left\vert yx^{-1}\right\vert ^{2\nu }x\geq x^{\ast }\left( 1-\nu
+\nu \left\vert yx^{-1}\right\vert ^{-2}\right) ^{-1}x  \label{e.2.8}
\end{equation}%
for any $\nu \in \left[ 0,1\right] .$

Observe that 
\begin{align*}
x^{\ast }\left( 1-\nu +\nu \left\vert yx^{-1}\right\vert ^{2}\right) x&
=x^{\ast }\left( 1-\nu +\nu \left( x^{\ast }\right) ^{-1}y^{\ast
}yx^{-1}\right) x \\
& =x^{\ast }\left( 1-\nu +\nu \left( x^{\ast }\right) ^{-1}y^{\ast
}yx^{-1}\right) x \\
& =\left( 1-\nu \right) \left\vert x\right\vert ^{2}+\nu \left\vert
y\right\vert ^{2}=\left\vert x\right\vert ^{2}\nabla _{\nu }\left\vert
y\right\vert ^{2}
\end{align*}%
and%
\begin{align*}
x^{\ast }\left( 1-\nu +\nu \left\vert yx^{-1}\right\vert ^{-2}\right)
^{-1}x& =x^{\ast }\left( 1-\nu +\nu \left( \left( x^{\ast }\right)
^{-1}y^{\ast }yx^{-1}\right) ^{-1}\right) ^{-1}x \\
& =x^{\ast }\left( 1-\nu +\nu xy^{-1}\left( y^{\ast }\right) ^{-1}x^{\ast
}\right) ^{-1}x \\
& =x^{\ast }\left( x\left( \left( 1-\nu \right) x^{-1}\left( x^{\ast
}\right) ^{-1}+\nu y^{-1}\left( y^{\ast }\right) ^{-1}\right) x^{\ast
}\right) ^{-1}x \\
& =x^{\ast }\left( x\left( \left( 1-\nu \right) \left( x^{\ast }x\right)
^{-1}+\nu \left( y^{\ast }y\right) ^{-1}\right) x^{\ast }\right) ^{-1}x \\
& =x^{\ast }\left( x^{\ast }\right) ^{-1}\left( \left( 1-\nu \right) \left(
x^{\ast }x\right) ^{-1}+\nu \left( y^{\ast }y\right) ^{-1}\right)
^{-1}x^{-1}x \\
& =\left( \left( 1-\nu \right) \left\vert x\right\vert ^{-2}+\nu \left\vert
y\right\vert ^{-2}\right) ^{-1}=\left\vert x\right\vert ^{2}!_{\nu
}\left\vert y\right\vert ^{2}.
\end{align*}%
Therefore by (\ref{e.2.8}) we get the desired result (\ref{e.2.6}).
\end{proof}

We can define the weighted means for $\nu \in \left[ 0,1\right] $ and the
elements $x,$ $y\in \limfunc{Inv}\left( A\right) $ and $\nu \in \left[ 0,1%
\right] $ by 
\begin{equation*}
x\nabla _{\nu }^{1/2}y:=\left( \left\vert x\right\vert ^{2}\nabla _{\nu
}\left\vert y\right\vert ^{2}\right) ^{1/2}=\left( \left( 1-\nu \right)
\left\vert x\right\vert ^{2}+\nu \left\vert y\right\vert ^{2}\right) ^{1/2}
\end{equation*}%
and 
\begin{equation*}
x!_{\nu }^{1/2}y:=\left( \left\vert x\right\vert ^{2}!_{\nu }\left\vert
y\right\vert ^{2}\right) ^{1/2}=\left( \left( 1-\nu \right) \left\vert
x\right\vert ^{-2}+\nu \left\vert y\right\vert ^{-2}\right) ^{-1/2}.
\end{equation*}

\begin{corollary}
\label{c.2.1}Let $A$ be a Hermitian unital Banach $\ast $-algebra with
continuous involution. Then for any $x,$ $y\in \limfunc{Inv}\left( A\right) $
and $\nu \in \left[ 0,1\right] $ we have 
\begin{equation}
x\nabla _{\nu }^{1/2}y\geq x\circledS _{\nu }^{1/2}y\geq x!_{\nu }^{1/2}y.
\label{e.2.9}
\end{equation}
\end{corollary}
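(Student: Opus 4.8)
The plan is to obtain (\ref{e.2.9}) directly from Theorem \ref{t.2.1} by applying the L\"{o}wner--Heinz inequality with exponent $1/2$ to each link of the chain established there. First I would record that Theorem \ref{t.2.1} furnishes, for $x,y\in \limfunc{Inv}(A)$ and $\nu \in \left[ 0,1\right] $, the two inequalities
\[
\left\vert x\right\vert ^{2}\nabla _{\nu }\left\vert y\right\vert ^{2}\geq x\circledS _{\nu }y\geq \left\vert x\right\vert ^{2}!_{\nu }\left\vert y\right\vert ^{2},
\]
and that all three elements occurring here are positive: the outer two are the weighted arithmetic and harmonic means of the positive elements $\left\vert x\right\vert ^{2}$ and $\left\vert y\right\vert ^{2}$, while the middle one satisfies $x\circledS _{\nu }y>0$ by its very construction in (\ref{Se}).

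Since $A$ is now assumed to have \emph{continuous} involution, Okayasu's extension of the L\"{o}wner--Heinz inequality \cite{O} is available: if $a\geq b>0$ and $p\in \left[ 0,1\right] $, then $a^{p}\geq b^{p}$. Taking $p=\frac{1}{2}$ and applying this separately to the two inequalities above, the resulting order-preservation of the square root on positive elements yields
\[
\left( \left\vert x\right\vert ^{2}\nabla _{\nu }\left\vert y\right\vert ^{2}\right) ^{1/2}\geq \left( x\circledS _{\nu }y\right) ^{1/2}\geq \left( \left\vert x\right\vert ^{2}!_{\nu }\left\vert y\right\vert ^{2}\right) ^{1/2}.
\]

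It then remains only to translate these square roots back into the $1/2$-mean notation introduced just before the statement: the leftmost term is by definition $x\nabla _{\nu }^{1/2}y$, the rightmost is $x!_{\nu }^{1/2}y$, and the middle term is $x\circledS _{\nu }^{1/2}y$ by (\ref{Se1/2}). This is precisely (\ref{e.2.9}).

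The only genuinely essential ingredient---and hence the step I would flag as the crux---is the appeal to the L\"{o}wner--Heinz inequality, which is exactly why the continuous-involution hypothesis is imposed in the corollary (and was not needed in Theorem \ref{t.2.1}); without it the order-preservation of the map $t\mapsto t^{1/2}$ on positive elements of $A$ is not guaranteed at this level of generality. Everything else is bookkeeping with the definitions of the $1/2$-means.
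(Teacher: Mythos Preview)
Your argument is correct and is essentially the paper's own proof: take the square root across the chain (\ref{e.2.6}) of Theorem \ref{t.2.1} and invoke Okayasu's L\"{o}wner--Heinz result, then read off the $1/2$-mean notation. Your remark that the continuous-involution hypothesis is precisely what makes Okayasu's theorem applicable is a welcome clarification but not an additional step.
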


\begin{proof}
It follows by taking the square root in the inequality (\ref{e.2.6} ) and by
using Okayasu's result from the introduction.
\end{proof}

Recall that a $C^{\ast }$\textit{-algebra} $A$ is a Banach $\ast $-algebra
such that the norm satisfies the condition%
\begin{equation*}
\left\Vert a^{\ast }a\right\Vert =\left\Vert a\right\Vert ^{2}\text{ for any 
}a\in A.
\end{equation*}%
If a $C^{\ast }$-algebra $A$ has a unit $1$, then automatically $\left\Vert
1\right\Vert =1.$

It is well know that, if $A$ is a $C^{\ast }$-algebra, then (see for
instance \cite[2.2.5 Theorem]{M}) 
\begin{equation*}
b\geq a\geq 0\text{ implies that }\left\Vert b\right\Vert \geq \left\Vert
a\right\Vert .
\end{equation*}

\begin{corollary}
\label{c.2.2}Let $A$ be a unital $C^{\ast }$-algebra. Then for any $x,$ $%
y\in \limfunc{Inv}\left( A\right) $ and $\nu \in \left[ 0,1\right] $ we have%
\begin{equation}
\left( 1-\nu \right) \left\Vert x\right\Vert ^{2}+\nu \left\Vert
y\right\Vert ^{2}\geq \left\Vert \left( 1-\nu \right) \left\vert
x\right\vert ^{2}+\nu \left\vert y\right\vert ^{2}\right\Vert \geq
\left\Vert \left\vert yx^{-1}\right\vert ^{\nu }x\right\Vert ^{2}.
\label{e.2.9.1}
\end{equation}
\end{corollary}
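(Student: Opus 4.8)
The plan is to derive both halves of \eqref{e.2.9.1} directly from material already in hand, using only the two defining features of a $C^{\ast }$-algebra recalled just before the statement: the norm identity $\left\Vert a^{\ast }a\right\Vert =\left\Vert a\right\Vert ^{2}$ and the monotonicity $b\geq a\geq 0\Rightarrow \left\Vert b\right\Vert \geq \left\Vert a\right\Vert $. As a preliminary bookkeeping step I would record the consequences of the $C^{\ast }$-identity $\left\Vert \left\vert c\right\vert ^{2}\right\Vert =\left\Vert c^{\ast }c\right\Vert =\left\Vert c\right\Vert ^{2}$: taking $c=x$ and $c=y$ gives $\left\Vert \left\vert x\right\vert ^{2}\right\Vert =\left\Vert x\right\Vert ^{2}$ and $\left\Vert \left\vert y\right\vert ^{2}\right\Vert =\left\Vert y\right\Vert ^{2}$, while taking $c=\left\vert yx^{-1}\right\vert ^{\nu }x$ and remembering that by \eqref{Se} the positive element $x\circledS _{\nu }y$ equals $\left\vert c\right\vert ^{2}=c^{\ast }c$ turns its norm into $\left\Vert x\circledS _{\nu }y\right\Vert =\left\Vert c\right\Vert ^{2}=\left\Vert \left\vert yx^{-1}\right\vert ^{\nu }x\right\Vert ^{2}$, the right-hand quantity in \eqref{e.2.9.1}.

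For the left inequality I would simply apply the triangle inequality. Since $\nu \in \left[ 0,1\right] $, both coefficients $1-\nu $ and $\nu $ are nonnegative, so
\[
\left\Vert \left( 1-\nu \right) \left\vert x\right\vert ^{2}+\nu \left\vert y\right\vert ^{2}\right\Vert \leq \left( 1-\nu \right) \left\Vert \left\vert x\right\vert ^{2}\right\Vert +\nu \left\Vert \left\vert y\right\vert ^{2}\right\Vert =\left( 1-\nu \right) \left\Vert x\right\Vert ^{2}+\nu \left\Vert y\right\Vert ^{2},
\]
where the last equality is the preliminary identity. This is exactly the first inequality in \eqref{e.2.9.1}.

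For the right inequality I would invoke the left half of Theorem \ref{t.2.1}, namely $\left\vert x\right\vert ^{2}\nabla _{\nu }\left\vert y\right\vert ^{2}\geq x\circledS _{\nu }y$, after rewriting the arithmetic mean as $\left\vert x\right\vert ^{2}\nabla _{\nu }\left\vert y\right\vert ^{2}=\left( 1-\nu \right) \left\vert x\right\vert ^{2}+\nu \left\vert y\right\vert ^{2}$. Both sides of this operator inequality are nonnegative, so the $C^{\ast }$-monotonicity of the norm yields $\left\Vert \left( 1-\nu \right) \left\vert x\right\vert ^{2}+\nu \left\vert y\right\vert ^{2}\right\Vert \geq \left\Vert x\circledS _{\nu }y\right\Vert $, and the latter norm equals $\left\Vert \left\vert yx^{-1}\right\vert ^{\nu }x\right\Vert ^{2}$ by the preliminary computation. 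Chaining the two displays closes the argument.

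I do not expect any genuine obstacle here, since every ingredient is already established. The only point demanding a moment's care is the preliminary bookkeeping: one must observe that $x\circledS _{\nu }y$ is itself a modulus-squared $c^{\ast }c$, so that its norm is $\left\Vert c\right\Vert ^{2}$ rather than $\left\Vert c\right\Vert $. Flagging this use of the $C^{\ast }$-identity explicitly is what makes the chain of norm inequalities land precisely on $\left\Vert \left\vert yx^{-1}\right\vert ^{\nu }x\right\Vert ^{2}$ as written in \eqref{e.2.9.1}.
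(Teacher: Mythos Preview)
Your proposal is correct and matches the paper's intended derivation: the corollary is stated without an explicit proof, immediately after recalling the $C^{\ast}$-identity $\left\Vert a^{\ast}a\right\Vert=\left\Vert a\right\Vert^{2}$ and the norm monotonicity $b\geq a\geq 0\Rightarrow\left\Vert b\right\Vert\geq\left\Vert a\right\Vert$, so it is meant to follow from Theorem~\ref{t.2.1} together with those two facts and the triangle inequality, exactly as you lay out. The bookkeeping step identifying $\left\Vert x\circledS_{\nu}y\right\Vert=\left\Vert\left\vert yx^{-1}\right\vert^{\nu}x\right\Vert^{2}$ via \eqref{Se} is the right way to land on the squared norm on the right-hand side.
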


\section{Refinements and Reverses}

If $X$ is a linear space and $C\subseteq X$ a convex subset in $X$, then for
any convex function $f:C\rightarrow \mathbb{R}$ and any $z_{i}\in
C,r_{i}\geq 0$ for $i\in \left\{ 1,...,k\right\} ,k\geq 2$ with $%
\sum_{i=1}^{k}r_{i}=R_{k}>0$ one has the \textit{weighted Jensen's
inequality:} 
\begin{equation}
\frac{1}{R_{k}}\sum_{i=1}^{k}r_{i}f\left( z_{i}\right) \geq f\left( \frac{1}{%
R_{k}}\sum_{i=1}^{k}r_{i}z_{i}\right) .  \tag{J}  \label{J}
\end{equation}%
If $f:C\rightarrow \mathbb{R}$ is strictly convex and $r_{i}>0$ for $i\in
\left\{ 1,...,k\right\} $ then the equality case hods in (\ref{J}) if and
only if $z_{1}=...=z_{n}.$

By $\mathcal{P}_{n}$ we denote the set of all nonnegative $n$-tuples $\left(
p_{1},...,p_{n}\right) $ with the property that $\sum_{i=1}^{n}p_{i}=1.$
Consider the \textit{normalised Jensen functional}%
\begin{equation*}
\mathcal{J}_{n}\left( f,\mathbf{x,p}\right) =\sum_{i=1}^{n}p_{i}f\left(
x_{i}\right) -f\left( \sum_{i=1}^{n}p_{i}x_{i}\right) \geq 0,
\end{equation*}%
where $f:C\rightarrow \mathbb{R}$ be a convex function on the convex set $C$
and $\mathbf{x}=\left( x_{1},...,x_{n}\right) \in C^{n}$ and $\mathbf{p\in }%
\mathcal{P}_{n}.$

The following result holds \cite{SSDBul}:

\begin{lemma}
\label{l.1}If $\mathbf{p,}$ $\mathbf{q\in }\mathcal{P}_{n}$, $q_{i}>0$ for
each $i\in \left\{ 1,...,n\right\} $ then 
\begin{equation}
\max_{1\leq i\leq n}\left\{ \frac{p_{i}}{q_{i}}\right\} \mathcal{J}%
_{n}\left( f,\mathbf{x,q}\right) \geq \mathcal{J}_{n}\left( f,\mathbf{x,p}%
\right) \geq \min_{1\leq i\leq n}\left\{ \frac{p_{i}}{q_{i}}\right\} 
\mathcal{J}_{n}\left( f,\mathbf{x,q}\right) \left( \geq 0\right) .
\label{e.3.1}
\end{equation}
\end{lemma}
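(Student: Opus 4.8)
The plan is to establish the two bounds in (\ref{e.3.1}) separately, reducing each to a double use of the convexity of $f$. Write $M:=\max_{1\le i\le n}\{p_i/q_i\}$ and $m:=\min_{1\le i\le n}\{p_i/q_i\}$, and abbreviate the two barycentres by $\overline{x}_{\mathbf{p}}:=\sum_{i=1}^{n}p_i x_i$ and $\overline{x}_{\mathbf{q}}:=\sum_{i=1}^{n}q_i x_i$. Since $q_i>0$ and $p_i\ge 0$, both $M$ and $m$ are well defined and $m\ge 0$.

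For the upper bound I would introduce the numbers $r_i:=Mq_i-p_i$, which are nonnegative precisely because $M\ge p_i/q_i$, and set $R:=\sum_{i=1}^{n}r_i=M-1$. If $R=0$ then $M=1$ forces $\mathbf{p}=\mathbf{q}$ and the inequality is a trivial equality, so I may assume $R>0$. The decisive observation is the barycentre identity $M\overline{x}_{\mathbf{q}}=\overline{x}_{\mathbf{p}}+\sum_{i=1}^{n}r_i x_i$, which exhibits $\overline{x}_{\mathbf{q}}$ as the genuine convex combination $\frac{1}{M}\overline{x}_{\mathbf{p}}+\frac{R}{M}\cdot\frac{1}{R}\sum_{i=1}^{n}r_i x_i$, the weights $1/M$ and $R/M$ summing to $1$ because $1+R=M$. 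Convexity of $f$ applied to this combination gives $Mf(\overline{x}_{\mathbf{q}})\le f(\overline{x}_{\mathbf{p}})+Rf(\frac{1}{R}\sum_i r_i x_i)$, while Jensen's inequality (\ref{J}) for the weights $r_i/R$ gives $Rf(\frac{1}{R}\sum_i r_i x_i)\le\sum_i r_i f(x_i)$. Adding these and recalling that $\sum_i r_i f(x_i)=M\sum_i q_i f(x_i)-\sum_i p_i f(x_i)$ yields exactly $M\,\mathcal{J}_n(f,\mathbf{x},\mathbf{q})\ge\mathcal{J}_n(f,\mathbf{x},\mathbf{p})$.

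For the lower bound I would run the symmetric argument with $s_i:=p_i-mq_i\ge 0$ and $S:=\sum_i s_i=1-m$, again disposing of the trivial case $m=1$ (equivalently $\mathbf{p}=\mathbf{q}$). Here the identity $\overline{x}_{\mathbf{p}}=m\overline{x}_{\mathbf{q}}+\sum_i s_i x_i$ displays $\overline{x}_{\mathbf{p}}$ as the convex combination $m\,\overline{x}_{\mathbf{q}}+S\cdot\frac{1}{S}\sum_i s_i x_i$, the weights $m$ and $S=1-m$ summing to $1$. The same pair of steps (convexity of $f$, then (\ref{J}) for the weights $s_i/S$) produces $f(\overline{x}_{\mathbf{p}})\le mf(\overline{x}_{\mathbf{q}})+\sum_i s_i f(x_i)$, which rearranges to $\mathcal{J}_n(f,\mathbf{x},\mathbf{p})\ge m\,\mathcal{J}_n(f,\mathbf{x},\mathbf{q})$. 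The parenthetical $(\ge 0)$ is then immediate from $m\ge 0$ together with $\mathcal{J}_n(f,\mathbf{x},\mathbf{q})\ge 0$, the latter being Jensen's inequality (\ref{J}) itself.

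The only genuinely delicate point, and the one I would watch most carefully, is the bookkeeping that forces the two scalar coefficients in each barycentre identity to sum to $1$, since this is exactly what licenses the convexity step; everything else is routine expansion of the definition of $\mathcal{J}_n$. It is also important to record the degenerate cases $M=1$ and $m=1$, both equivalent to $\mathbf{p}=\mathbf{q}$, because the intermediate normalisation divides by $R$ and by $S$ respectively.
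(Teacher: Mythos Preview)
Your argument is correct. The paper does not actually prove this lemma; it merely quotes the result from \cite{SSDBul}, so there is no in-paper proof to compare against. Your approach---introducing the auxiliary nonnegative weights $r_i=Mq_i-p_i$ (respectively $s_i=p_i-mq_i$), exhibiting one barycentre as a two-point convex combination of the other barycentre and the $r$-barycentre (respectively $s$-barycentre), and then applying convexity twice---is precisely the standard argument, and is in fact the one given in the cited reference. The handling of the degenerate cases $M=1$ and $m=1$ and the observation that $M\ge 1\ge m$ (hence $R,S\ge 0$) are all in order.
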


In the case $n=2,$ if we put $p_{1}=1-p,$ $p_{2}=p,$ $q_{1}=1-q$ and $%
q_{2}=q $ with $p\in \left[ 0,1\right] $ and $q\in \left( 0,1\right) $ then
by (\ref{e.3.1}) we get 
\begin{align}
& \max \left\{ \frac{p}{q},\frac{1-p}{1-q}\right\} \left[ \left( 1-q\right)
f\left( x\right) +qf\left( y\right) -f\left( \left( 1-q\right) x+qy\right) %
\right]  \label{e.3.2} \\
& \geq \left( 1-p\right) f\left( x\right) +pf\left( y\right) -f\left( \left(
1-p\right) x+py\right)  \notag \\
& \geq \min \left\{ \frac{p}{q},\frac{1-p}{1-q}\right\} \left[ \left(
1-q\right) f\left( x\right) +qf\left( y\right) -f\left( \left( 1-q\right)
x+qy\right) \right]  \notag
\end{align}%
for any $x,$ $y\in C.$

If we take $q=\frac{1}{2}$ in (\ref{e.3.2}), then we get%
\begin{align}
& 2\max \left\{ t,1-t\right\} \left[ \frac{f\left( x\right) +f\left(
y\right) }{2}-f\left( \frac{x+y}{2}\right) \right]  \label{e.3.3} \\
& \geq \left( 1-t\right) f\left( x\right) +tf\left( y\right) -f\left( \left(
1-t\right) x+ty\right)  \notag \\
& \geq 2\min \left\{ t,1-t\right\} \left[ \frac{f\left( x\right) +f\left(
y\right) }{2}-f\left( \frac{x+y}{2}\right) \right]  \notag
\end{align}%
for any $x,$ $y\in C$ and $t\in \left[ 0,1\right] .$

We consider the scalar weighted arithmetic, geometric and harmonic means
defined by $A_{\nu }\left( a,b\right) :=\left( 1-\nu \right) a+\nu b,$ $%
G_{\nu }\left( a,b\right) :=a^{1-\nu }b^{\nu }$ and $H_{\nu }\left(
a,b\right) =A_{\nu }^{-1}\left( a^{-1},b^{-1}\right) $ where $a,$ $b>0$ and $%
\nu \in \left[ 0,1\right] .$

If we take the convex function $f:\mathbb{R\rightarrow }\left( 0,\infty
\right) $, $f\left( x\right) =\exp \left( \alpha x\right) ,$ with $\alpha
\neq 0,$ then we have from (\ref{e.3.2}) that 
\begin{align}
& \max \left\{ \frac{p}{q},\frac{1-p}{1-q}\right\} \left[ A_{q}\left( \exp
\left( \alpha x\right) ,\exp \left( \alpha y\right) \right) -\exp \left(
\alpha A_{q}\left( a,b\right) \right) \right]  \label{e.3.4} \\
& \geq A_{p}\left( \exp \left( \alpha x\right) ,\exp \left( \alpha y\right)
\right) -\exp \left( \alpha A_{p}\left( a,b\right) \right)  \notag \\
& \geq \min \left\{ \frac{p}{q},\frac{1-p}{1-q}\right\} \left[ A_{q}\left(
\exp \left( \alpha x\right) ,\exp \left( \alpha y\right) \right) -\exp
\left( \alpha A_{q}\left( a,b\right) \right) \right]  \notag
\end{align}%
for any $p\in \left[ 0,1\right] $ and $q\in \left( 0,1\right) $ and any $x,$ 
$y\in \mathbb{R}$.

For $q=\frac{1}{2}$ we have by (\ref{e.3.4}) that 
\begin{align}
& 2\max \left\{ p,1-p\right\} \left[ A\left( \exp \left( \alpha x\right)
,\exp \left( \alpha y\right) \right) -\exp \left( \alpha A\left( a,b\right)
\right) \right]  \label{e.3.5} \\
& \geq A_{p}\left( \exp \left( \alpha x\right) ,\exp \left( \alpha y\right)
\right) -\exp \left( \alpha A_{p}\left( a,b\right) \right)  \notag \\
& \geq 2\min \left\{ p,1-p\right\} \left[ A\left( \exp \left( \alpha
x\right) ,\exp \left( \alpha y\right) \right) -\exp \left( \alpha A\left(
a,b\right) \right) \right]  \notag
\end{align}%
for any $p\in \left[ 0,1\right] $ and any $x,$ $y\in \mathbb{R}$.

If we take $x=\ln a$ and $y=\ln b$ in (\ref{e.3.4}), then we get 
\begin{multline}
\max \left\{ \frac{p}{q},\frac{1-p}{1-q}\right\} \left[ A_{q}\left(
a^{\alpha },b^{\alpha }\right) -G_{q}^{\alpha }\left( a,b\right) \right]
\geq A_{p}\left( a^{\alpha },b^{\alpha }\right) -G_{p}^{\alpha }\left(
a,b\right)  \label{e.3.6} \\
\geq \min \left\{ \frac{p}{q},\frac{1-p}{1-q}\right\} \left[ A_{q}\left(
a^{\alpha },b^{\alpha }\right) -G_{q}^{\alpha }\left( a,b\right) \right]
\end{multline}%
for any $a,$ $b>0,$ for any $p\in \left[ 0,1\right] $, $q\in \left(
0,1\right) $ and $\alpha \neq 0.$

For $q=\frac{1}{2}$ we have by (\ref{e.3.6}) that%
\begin{align}
\max \left\{ p,1-p\right\} \left( b^{\frac{\alpha }{2}}-a^{\frac{\alpha }{2}%
}\right) ^{2}& \geq A_{p}\left( a^{\alpha },b^{\alpha }\right)
-G_{p}^{\alpha }\left( a,b\right)  \label{e.3.7} \\
& \geq \min \left\{ p,1-p\right\} \left( b^{\frac{\alpha }{2}}-a^{\frac{%
\alpha }{2}}\right) ^{2}  \notag
\end{align}%
for any $a,$ $b>0,$ for any $p\in \left[ 0,1\right] $ and $\alpha \neq 0.$

For $\alpha =1$ we get from (\ref{e.3.7}) that%
\begin{align}
\max \left\{ p,1-p\right\} \left( \sqrt{b}-\sqrt{a}\right) ^{2}& \geq
A_{p}\left( a,b\right) -G_{p}\left( a,b\right)  \label{e.3.8} \\
& \geq \min \left\{ p,1-p\right\} \left( \sqrt{b}-\sqrt{a}\right) ^{2} 
\notag
\end{align}%
for any $a,$ $b>0$ and for any $p\in \left[ 0,1\right] ,$ which are the
inequalities obtained by Kittaneh and Manasrah in \cite{KM1} and \cite{KM2}.

For $\alpha =1$ in (\ref{e.3.6}) we obtain 
\begin{multline}
\max \left\{ \frac{p}{q},\frac{1-p}{1-q}\right\} \left[ A_{q}\left(
a,b\right) -G_{q}\left( a,b\right) \right] \geq A_{p}\left( a,b\right)
-G_{p}\left( a,b\right)  \label{e.3.9} \\
\geq \min \left\{ \frac{p}{q},\frac{1-p}{1-q}\right\} \left[ A_{q}\left(
a,b\right) -G_{q}\left( a,b\right) \right] ,
\end{multline}%
for any $a,$ $b>0,$ for any $p\in \left[ 0,1\right] ,$ which is the
inequality (2.1) from \cite{AFK} in the particular case $\lambda =1$ in a
slightly more general form for the weights $p,$ $q.$

We have the following refinement and reverse for the inequality (\ref{e.2.1}%
):

\begin{theorem}
\label{t.3.1}For any $x,$ $y\in \limfunc{Inv}\left( A\right) $ we have for $%
p\in \left[ 0,1\right] $ and $q\in \left( 0,1\right) $ that%
\begin{multline}
\max \left\{ \frac{p}{q},\frac{1-p}{1-q}\right\} \left( \left\vert
x\right\vert ^{2}\nabla _{q}\left\vert y\right\vert ^{2}-x\circledS
_{q}y\right) \geq \left\vert x\right\vert ^{2}\nabla _{p}\left\vert
y\right\vert ^{2}-x\circledS _{p}y  \label{e.3.12} \\
\geq \min \left\{ \frac{p}{q},\frac{1-p}{1-q}\right\} \left( \left\vert
x\right\vert ^{2}\nabla _{q}\left\vert y\right\vert ^{2}-x\circledS
_{q}y\right) .
\end{multline}%
In particular, we have%
\begin{multline}
2\max \left\{ p,1-p\right\} \left( \left\vert x\right\vert ^{2}\nabla
\left\vert y\right\vert ^{2}-x\circledS y\right) \geq \left\vert
x\right\vert ^{2}\nabla _{p}\left\vert y\right\vert ^{2}-x\circledS _{p}y
\label{e.3.12.1} \\
\geq 2\min \left\{ p,1-p\right\} \left( \left\vert x\right\vert ^{2}\nabla
\left\vert y\right\vert ^{2}-x\circledS y\right) ,
\end{multline}%
for any $p\in \left[ 0,1\right] .$
\end{theorem}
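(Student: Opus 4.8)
The plan is to reduce (\ref{e.3.12}) to the scalar inequality (\ref{e.3.9}), which has already been derived from the weighted Jensen functional estimate, and then to lift it to the algebra $A$ via the functional calculus of Lemma \ref{l.2.1} followed by a congruence $w\mapsto x^{\ast }wx$. First I would specialise (\ref{e.3.9}) by setting $a=1$ and $b=z$, so that $A_{q}\left( 1,z\right) -G_{q}\left( 1,z\right) =1-q+qz-z^{q}$ and likewise for $p$. This gives, for every $z>0$,
\begin{equation*}
\max \left\{ \frac{p}{q},\frac{1-p}{1-q}\right\} \left( 1-q+qz-z^{q}\right)
\geq 1-p+pz-z^{p}\geq \min \left\{ \frac{p}{q},\frac{1-p}{1-q}\right\} \left(
1-q+qz-z^{q}\right) .
\end{equation*}

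Setting $f\left( z\right) :=\max \left\{ \frac{p}{q},\frac{1-p}{1-q}\right\} \left( 1-q+qz-z^{q}\right) $, $g\left( z\right) :=1-p+pz-z^{p}$ and $h\left( z\right) :=\min \left\{ \frac{p}{q},\frac{1-p}{1-q}\right\} \left( 1-q+qz-z^{q}\right) $, each of these is analytic in the right half plane $\{\func{Re}z>0\}$, the only nonpolynomial ingredients being the principal powers $z^{q}$ and $z^{p}$, and the displayed chain reads $f\left( z\right) \geq g\left( z\right) \geq h\left( z\right) $ on $\left( 0,\infty \right) $. Applying Lemma \ref{l.2.1} with $u=\left\vert yx^{-1}\right\vert ^{2}>0$, whose spectrum lies in $\left( 0,\infty \right) $, I obtain the element inequality $f\left( u\right) \geq g\left( u\right) \geq h\left( u\right) $ in the order of $A$.

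It then remains to conjugate by $x$. Multiplying the chain $f\left( u\right) \geq g\left( u\right) \geq h\left( u\right) $ on the left by $x^{\ast }$ and on the right by $x$ preserves the order, since $a\geq b$ forces $x^{\ast }ax\geq x^{\ast }bx$ (write the nonnegative difference as $\left( w^{1/2}x\right) ^{\ast }\left( w^{1/2}x\right) $ and invoke Shirali--Ford), while the nonnegative scalar factors $\max \left\{ \frac{p}{q},\frac{1-p}{1-q}\right\} $ and $\min \left\{ \frac{p}{q},\frac{1-p}{1-q}\right\} $ pass through the transformation. Using the two identities already computed in the proof of Theorem \ref{t.2.1}, namely $x^{\ast }\left( 1-\nu +\nu u\right) x=\left\vert x\right\vert ^{2}\nabla _{\nu }\left\vert y\right\vert ^{2}$ and $x^{\ast }u^{\nu }x=x^{\ast }\left\vert yx^{-1}\right\vert ^{2\nu }x=x\circledS _{\nu }y$, the conjugated central term becomes $\left\vert x\right\vert ^{2}\nabla _{p}\left\vert y\right\vert ^{2}-x\circledS _{p}y$, and the outer terms become the $q$-analogue scaled by the respective $\max $ and $\min $ factors, which is exactly (\ref{e.3.12}). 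The particular case (\ref{e.3.12.1}) follows on putting $q=\frac{1}{2}$, since then $\frac{p}{q}=2p$ and $\frac{1-p}{1-q}=2\left( 1-p\right) $, whence $\max \left\{ \frac{p}{q},\frac{1-p}{1-q}\right\} =2\max \left\{ p,1-p\right\} $, $\min \left\{ \frac{p}{q},\frac{1-p}{1-q}\right\} =2\min \left\{ p,1-p\right\} $, while $\nabla _{q}=\nabla $ and $\circledS _{q}=\circledS $.

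I do not anticipate a genuine obstacle, as the hard content is already packaged in the scalar estimate (\ref{e.3.9}) and in Lemma \ref{l.2.1}. The only points requiring care are confirming the analyticity of all three functions on $\{\func{Re}z>0\}$ so that Lemma \ref{l.2.1} applies, and the bookkeeping of the order relation under the congruence $w\mapsto x^{\ast }wx$ together with multiplication by the nonnegative scalar factors, which one must check does not reverse either of the two inequalities in the chain.
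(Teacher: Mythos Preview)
Your proposal is correct and follows essentially the same route as the paper: specialise the scalar inequality (\ref{e.3.9}) at $a=1$, $b=z$, pass to the element $u=\left\vert yx^{-1}\right\vert ^{2}$ via Lemma \ref{l.2.1}, and then conjugate by $x$ to recover the $\nabla$ and $\circledS$ expressions exactly as in Theorem \ref{t.2.1}. Your write-up is in fact slightly more explicit than the paper's about why the congruence $w\mapsto x^{\ast }wx$ preserves order and about the specialisation $q=\tfrac12$ yielding (\ref{e.3.12.1}).
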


\begin{proof}
From the inequality (\ref{e.3.9}) for $a=1$ and $b=t>0$ we have%
\begin{align}
\max \left\{ \frac{p}{q},\frac{1-p}{1-q}\right\} \left( 1-q+qt-t^{q}\right)
& \geq 1-p+pt-t^{p}  \label{e.3.13} \\
& \geq \min \left\{ \frac{p}{q},\frac{1-p}{1-q}\right\} \left(
1-q+qt-t^{q}\right) ,  \notag
\end{align}%
where $p\in \left[ 0,1\right] $ and $q\in \left( 0,1\right) .$

Consider the functions $f\left( z\right) :=\max \left\{ \frac{p}{q},\frac{1-p%
}{1-q}\right\} \left( 1-q+qz-z^{q}\right) $, $g\left( z\right)
:=1-p+pz-z^{p} $ and $h\left( z\right) =\min \left\{ \frac{p}{q},\frac{1-p}{%
1-q}\right\} \left( 1-q+qt-t^{q}\right) $ where $z^{\nu }$, $\nu \in
\{p,q\}, $ is the principal of the power function. Then $f(z)$, $g\left(
z\right) $ and $h\left( z\right) $ are analytic in the right half open plane 
$\{\func{Re}z>0\}$ of the complex plane and and by (\ref{e.3.13}) we have $%
f(z)\geq g\left( z\right) \geq h\left( z\right) $ for any $z>0.$

If $0<u\in \limfunc{Inv}\left( A\right) $ and $\nu \in \left[ 0,1\right] ,$
then by Lemma \ref{l.2.1} we get%
\begin{align}
\max \left\{ \frac{p}{q},\frac{1-p}{1-q}\right\} \left( 1-q+qu-u^{q}\right)
& \geq 1-p+pu-u^{p}  \label{e.3.14} \\
& \geq \min \left\{ \frac{p}{q},\frac{1-p}{1-q}\right\} \left(
1-q+qu-u^{q}\right) ,  \notag
\end{align}%
where $p\in \left[ 0,1\right] $ and $q\in \left( 0,1\right) .$

If $x,$ $y\in \limfunc{Inv}\left( A\right) $, then by taking $u=\left\vert
yx^{-1}\right\vert ^{2}\in \limfunc{Inv}\left( A\right) $ in (\ref{e.3.14})
we have%
\begin{align}
& \max \left\{ \frac{p}{q},\frac{1-p}{1-q}\right\} \left( 1-q+q\left\vert
yx^{-1}\right\vert ^{2}-\left( \left\vert yx^{-1}\right\vert ^{2}\right)
^{q}\right)  \label{e.3.15} \\
& \geq 1-p+p\left\vert yx^{-1}\right\vert ^{2}-\left( \left\vert
yx^{-1}\right\vert ^{2}\right) ^{p}  \notag \\
& \geq \min \left\{ \frac{p}{q},\frac{1-p}{1-q}\right\} \left(
1-q+q\left\vert yx^{-1}\right\vert ^{2}-\left( \left\vert yx^{-1}\right\vert
^{2}\right) ^{q}\right) ,  \notag
\end{align}%
where $p\in \left[ 0,1\right] $ and $q\in \left( 0,1\right) .$

By multiplying the inequality (\ref{e.3.15}) at left with $x^{\ast }$ and at
right with $x$ we get the desired result (\ref{e.3.12}).
\end{proof}

\begin{remark}
\label{r.3.1}If $0<a,$ $b\in A,$ then by taking $x=a^{1/2}$ and $y=b^{1/2}$
in (\ref{e.3.12}) and (\ref{e.3.12.1}) we get%
\begin{align}
\max \left\{ \frac{p}{q},\frac{1-p}{1-q}\right\} \left( a\nabla
_{q}b-a\sharp _{q}b\right) & \geq a\nabla _{p}b-a\sharp _{p}b
\label{e.3.15.1} \\
& \geq \min \left\{ \frac{p}{q},\frac{1-p}{1-q}\right\} \left( a\nabla
_{q}b-a\sharp _{q}b\right) ,  \notag
\end{align}%
for any $p\in \left[ 0,1\right] $ and $q\in \left( 0,1\right) .$

In particular, for $q=1/2$ we have%
\begin{align}
2\max \left\{ p,1-p\right\} \left( a\nabla b-a\sharp b\right) & \geq a\nabla
_{p}b-a\sharp _{p}b  \label{e.3.15.2} \\
& \geq 2\min \left\{ p,1-p\right\} \left( a\nabla b-a\sharp b\right) , 
\notag
\end{align}%
for any $p\in \left[ 0,1\right] .$
\end{remark}

\section{Inequalities Under Boundedness Conditions}

We consider the function $f_{\nu }:[0,\infty )\rightarrow \lbrack 0,\infty )$
defined for $\nu \in \left( 0,1\right) $ by%
\begin{equation*}
f_{\nu }\left( t\right) =1-\nu +\nu t-t^{\nu }=A_{\nu }\left( 1,t\right)
-G_{\nu }\left( 1,t\right) ,
\end{equation*}%
where $A_{\nu }\left( \cdot ,\cdot \right) $ and $G_{\nu }\left( \cdot
,\cdot \right) $ are the scalar arithmetic and geometric means.

The following lemma holds.

\begin{lemma}
\label{l.4.1}For any $t\in \left[ k,K\right] \subset \lbrack 0,\infty )$ we
have%
\begin{equation}
\max_{t\in \left[ k,K\right] }f_{\nu }\left( x\right) =\Delta _{\nu }\left(
k,K\right) :=\left\{ 
\begin{array}{l}
A_{\nu }\left( 1,k\right) -G_{\nu }\left( 1,k\right) \text{ if }K<1, \\ 
\\ 
\max \left\{ A_{\nu }\left( 1,k\right) -G_{\nu }\left( 1,k\right) ,A_{\nu
}\left( 1,K\right) -G_{\nu }\left( 1,K\right) \right\} \text{ } \\ 
\text{if }k\leq 1\leq K, \\ 
\\ 
\text{ }A_{\nu }\left( 1,K\right) -G_{\nu }\left( 1,K\right) \text{ if }1<k%
\end{array}%
\right.  \label{e.4.1}
\end{equation}%
and%
\begin{equation}
\min_{t\in \left[ k,K\right] }f_{\nu }\left( x\right) =\delta _{\nu }\left(
k,K\right) :=\left\{ 
\begin{array}{l}
A_{\nu }\left( 1,K\right) -G_{\nu }\left( 1,K\right) \text{ if }K<1, \\ 
\\ 
0\text{ if }k\leq 1\leq K, \\ 
\\ 
A_{\nu }\left( 1,k\right) -G_{\nu }\left( 1,k\right) \text{ if }1<K.%
\end{array}%
\right.  \label{e.4.2}
\end{equation}
\end{lemma}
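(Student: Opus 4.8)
The plan is to reduce everything to the elementary monotonicity of the single-variable function $f_\nu$ on $[0,\infty)$, since the claimed formulas for $\Delta_\nu(k,K)$ and $\delta_\nu(k,K)$ are exactly what one reads off once the shape of $f_\nu$ is understood. First I would differentiate, obtaining
\[
f_\nu'(t)=\nu-\nu t^{\nu-1}=\nu\bigl(1-t^{\nu-1}\bigr),
\]
and determine its sign. Because $\nu\in(0,1)$ the exponent $\nu-1$ is negative, so the map $t\mapsto t^{\nu-1}$ is strictly decreasing and equals $1$ precisely at $t=1$; consequently $f_\nu'(t)<0$ for $t\in(0,1)$, $f_\nu'(1)=0$, and $f_\nu'(t)>0$ for $t>1$.

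From this sign analysis I would conclude that $f_\nu$ is strictly decreasing on $[0,1]$ and strictly increasing on $[1,\infty)$, with a unique global minimum at $t=1$, where $f_\nu(1)=1-\nu+\nu-1=0$. It is worth recording that $f_\nu$ extends continuously to $t=0$ with $f_\nu(0)=1-\nu$ (since $t^\nu\to 0$ as $t\to 0^{+}$), so no difficulty arises at the left endpoint when $k=0$: the branch $[0,1]$ is a genuine decreasing branch all the way down to $0$, even though $f_\nu'$ blows up to $-\infty$ there.

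With the profile of $f_\nu$ in hand, both formulas follow by splitting according to the position of $[k,K]$ relative to the minimizer $t=1$. For the maximum $\Delta_\nu(k,K)$: if $K<1$ the interval lies entirely in the decreasing branch, so the maximum is attained at the left endpoint, giving $f_\nu(k)=A_\nu(1,k)-G_\nu(1,k)$; if $1<k$ the interval lies in the increasing branch, so the maximum is at the right endpoint $f_\nu(K)=A_\nu(1,K)-G_\nu(1,K)$; and if $k\le 1\le K$ the maximum must occur at one of the two endpoints, the interior critical point $t=1$ being the minimizer, whence $\Delta_\nu(k,K)=\max\{f_\nu(k),f_\nu(K)\}$. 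For the minimum $\delta_\nu(k,K)$ the reasoning is dual: on a purely decreasing interval ($K<1$) the minimum sits at the right endpoint, on a purely increasing interval ($1<k$) it sits at the left endpoint, and when $k\le 1\le K$ the minimizer $t=1$ lies inside $[k,K]$, so the minimum equals $f_\nu(1)=0$.

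Since the entire argument is an application of the first-derivative test to a one-variable function, I do not anticipate a genuine obstacle. The only points requiring a little care are confirming that monotonicity persists right down to $t=0$ and matching the three geometric cases to the stated piecewise expressions; in particular the third case of $\delta_\nu(k,K)$ is to be read as the condition $1<k$ (the interval lying strictly above the minimizer), which is what forces the minimum onto the left endpoint $k$.
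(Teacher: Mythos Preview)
Your proposal is correct and follows essentially the same approach as the paper: compute $f_\nu'(t)=\nu(1-t^{\nu-1})$, deduce that $f_\nu$ is decreasing on $[0,1]$ and increasing on $[1,\infty)$ with $f_\nu(1)=0$, and then read off the extrema by locating $[k,K]$ relative to the minimizer $t=1$. Your observation that the third branch of $\delta_\nu$ should read $1<k$ rather than $1<K$ is also apt; the paper's statement contains this typo but the intended meaning is exactly what you wrote.
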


\begin{proof}
The function $f_{\nu }$ is differentiable and 
\begin{equation*}
f_{\nu }^{\prime }\left( t\right) =\nu \left( 1-t^{\nu -1}\right) =\nu \frac{%
t^{1-\nu }-1}{t^{1-\nu }},\text{ }t>0,
\end{equation*}%
which shows that the function $f_{\nu }$ is decreasing on $\left[ 0,1\right] 
$ and increasing on $[1,\infty ),$ $f_{\nu }\left( 0\right) =1-\nu ,$ $%
f_{\nu }\left( 1\right) =0,$ $\lim_{t\rightarrow \infty }f_{\nu }\left(
t\right) =\infty $ and the equation $f_{\nu }\left( t\right) =1-\nu $ for $%
t>0$ has the unique solution $t_{\nu }=\nu ^{\frac{1}{\nu -1}}>1.$

Therefore, by considering the $3$ possible situations for the location of
the interval $\left[ k,K\right] $ and the number $1$ we get the desired
bounds (\ref{e.4.1}) and (\ref{e.4.2}).
\end{proof}

\begin{remark}
\label{r.4.1}We have the inequalities 
\begin{equation*}
0\leq f_{\nu }\left( t\right) \leq 1-\nu \text{ for any }t\in \left[ 0,\nu ^{%
\frac{1}{\nu -1}}\right]
\end{equation*}%
and%
\begin{equation*}
1-\nu \leq f_{\nu }\left( t\right) \text{ for any }t\in \left[ \nu ^{\frac{1%
}{\nu -1}},\infty \right) .
\end{equation*}
\end{remark}

Assume that $x,$ $y\in \limfunc{Inv}\left( A\right) $ and the constants $%
M>m>0$ are such that%
\begin{equation}
M\geq \left\vert yx^{-1}\right\vert \geq m.  \label{mM}
\end{equation}%
The inequality (\ref{mM}) is equivalent to 
\begin{equation*}
M^{2}\geq \left\vert yx^{-1}\right\vert ^{2}=\left( x^{\ast }\right)
^{-1}\left\vert y\right\vert ^{2}x^{-1}\geq m^{2}.
\end{equation*}%
If we multiply at left with $x^{\ast }$ and at right with $x$ we get the
equivalent relation%
\begin{equation}
M^{2}\left\vert x\right\vert ^{2}\geq \left\vert y\right\vert ^{2}\geq
m^{2}\left\vert x\right\vert ^{2}.  \label{mM'}
\end{equation}

We have:

\begin{theorem}
\label{t.4.1}Assume that $x,$ $y\in \limfunc{Inv}\left( A\right) $ and the
constants $M>m>0$ are such that either (\ref{mM}), or, equivalently (\ref%
{mM'}) is true. Then we have the inequalities%
\begin{equation}
\Delta _{\nu }\left( m^{2},M^{2}\right) \left\vert x\right\vert ^{2}\geq
\left\vert x\right\vert ^{2}\nabla _{\nu }\left\vert y\right\vert
^{2}-x\circledS _{\nu }y\geq \delta _{\nu }\left( m^{2},M^{2}\right)
\left\vert x\right\vert ^{2},  \label{e.4.3}
\end{equation}%
for any $\nu \in \left[ 0,1\right] ,$ where $\Delta _{\nu }\left( \cdot
,\cdot \right) $ and $\delta _{\nu }\left( \cdot ,\cdot \right) $ are
defined by (\ref{e.4.1}) and (\ref{e.4.2}), respectively.
\end{theorem}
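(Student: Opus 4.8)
The plan is to recognize the middle quantity $\left\vert x\right\vert ^{2}\nabla _{\nu }\left\vert y\right\vert ^{2}-x\circledS _{\nu }y$ as the image under the functional calculus of the scalar function $f_{\nu }$, to sandwich it between two constants via Lemma \ref{l.2.1}, and then to transport the resulting operator bounds through conjugation by $x^{\ast }$ and $x$. Throughout I set $u:=\left\vert yx^{-1}\right\vert ^{2}$ and treat $\nu \in \left( 0,1\right) $ (the endpoints $\nu \in \{0,1\}$ being trivial, since there both outer terms and the middle term vanish).

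First I would establish the spectral localization. From the hypothesis (\ref{mM}), namely $M\geq \left\vert yx^{-1}\right\vert \geq m$, the relations $M-\left\vert yx^{-1}\right\vert \geq 0$ and $\left\vert yx^{-1}\right\vert -m\geq 0$ together with the Spectral Mapping Theorem give $\sigma \left( \left\vert yx^{-1}\right\vert \right) \subset \left[ m,M\right] $; since this spectrum lies in $\left( 0,\infty \right) $, squaring yields $\sigma \left( u\right) \subset \left[ m^{2},M^{2}\right] $. By Lemma \ref{l.4.1} the scalar function $f_{\nu }\left( t\right) =1-\nu +\nu t-t^{\nu }$ then satisfies
\[
\delta _{\nu }\left( m^{2},M^{2}\right) \leq f_{\nu }\left( t\right) \leq \Delta _{\nu }\left( m^{2},M^{2}\right) ,\qquad t\in \left[ m^{2},M^{2}\right] .
\]

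Next I would apply Lemma \ref{l.2.1} to the constant functions $z\mapsto \Delta _{\nu }\left( m^{2},M^{2}\right) $ and $z\mapsto \delta _{\nu }\left( m^{2},M^{2}\right) $ and to $z\mapsto f_{\nu }\left( z\right) $ (all analytic in $\{\func{Re}z>0\}$), on the interval $I=\left[ m^{2},M^{2}\right] $ and for the element $u$, whose spectrum we have just placed inside $I$. This produces the operator inequalities
\[
\Delta _{\nu }\left( m^{2},M^{2}\right) \,1\geq f_{\nu }\left( \left\vert yx^{-1}\right\vert ^{2}\right) \geq \delta _{\nu }\left( m^{2},M^{2}\right) \,1.
\]
Finally I would multiply these on the left by $x^{\ast }$ and on the right by $x$. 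Since conjugation $c^{\ast }\left( \cdot \right) c$ preserves the order of $A$ (as recorded in the proof of Theorem \ref{t.2.1}) and the scalars factor out, the two outer terms become $\Delta _{\nu }\left( m^{2},M^{2}\right) \left\vert x\right\vert ^{2}$ and $\delta _{\nu }\left( m^{2},M^{2}\right) \left\vert x\right\vert ^{2}$. For the middle term I would reuse the identities computed in the proof of Theorem \ref{t.2.1}, namely $x^{\ast }\left( 1-\nu +\nu \left\vert yx^{-1}\right\vert ^{2}\right) x=\left\vert x\right\vert ^{2}\nabla _{\nu }\left\vert y\right\vert ^{2}$ and $x^{\ast }\left\vert yx^{-1}\right\vert ^{2\nu }x=x\circledS _{\nu }y$, so that $x^{\ast }f_{\nu }\left( \left\vert yx^{-1}\right\vert ^{2}\right) x=\left\vert x\right\vert ^{2}\nabla _{\nu }\left\vert y\right\vert ^{2}-x\circledS _{\nu }y$, which yields exactly (\ref{e.4.3}).

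The computation for the middle term is already available from Theorem \ref{t.2.1}, so no genuine obstacle arises; the only point requiring care is the passage from the scalar hypothesis (\ref{mM}) to the spectral containment $\sigma \left( u\right) \subset \left[ m^{2},M^{2}\right] $, which rests on the Spectral Mapping Theorem together with the monotonicity of squaring on the positive spectrum, and the verification that Lemma \ref{l.2.1} indeed applies to constant functions.
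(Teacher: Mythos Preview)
Your proposal is correct and follows essentially the same route as the paper: bound the scalar function $f_{\nu}(t)=1-\nu+\nu t-t^{\nu}$ on $[m^{2},M^{2}]$ via Lemma~\ref{l.4.1}, lift to the element $u=\left\vert yx^{-1}\right\vert^{2}$ through Lemma~\ref{l.2.1}, and then conjugate by $x^{\ast}$ and $x$ to recover the stated inequality. The only cosmetic differences are that you justify the spectral containment $\sigma(u)\subset[m^{2},M^{2}]$ more explicitly via the Spectral Mapping Theorem and that you separate out the trivial endpoints $\nu\in\{0,1\}$.
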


\begin{proof}
From Lemma \ref{l.4.1} we have the double inequality%
\begin{equation*}
\Delta _{\nu }\left( k,K\right) \geq 1-\nu +\nu t-t^{\nu }\geq \delta _{\nu
}\left( k,K\right)
\end{equation*}%
for any $x\in \left[ k,K\right] \subset \left( 0,\infty \right) $ and $\nu
\in \left[ 0,1\right] .$

If $u\in A$ is an element such that $0<k\leq u\leq K,$ then $\sigma \left(
u\right) \subset \left[ k,K\right] $ and by Lemma \ref{l.2.1} we have in the
order of $A$ that%
\begin{equation}
\Delta _{\nu }\left( k,K\right) \geq 1-\nu +\nu u-u^{\nu }\geq \delta _{\nu
}\left( k,K\right)  \label{e.4.4}
\end{equation}%
for any $\nu \in \left[ 0,1\right] .$

If we take $u=\left\vert yx^{-1}\right\vert ^{2},$ then by (\ref{mM}) we
have $0<m^{2}\leq u\leq M^{2}$ and by (\ref{e.4.4}) we get in the order of $%
A $ that%
\begin{equation}
\Delta _{\nu }\left( m^{2},M^{2}\right) \geq 1-\nu +\nu \left\vert
yx^{-1}\right\vert ^{2}-\left\vert yx^{-1}\right\vert ^{2\nu }\geq \delta
_{\nu }\left( m^{2},M^{2}\right)  \label{e.4.5}
\end{equation}%
for any $\nu \in \left[ 0,1\right] .$

If we multiply this inequality at left with $x^{\ast }$ and at right with $x$
we get 
\begin{align}
\Delta _{\nu }\left( m^{2},M^{2}\right) \left\vert x\right\vert ^{2}& \geq
\left( 1-\nu \right) \left\vert x\right\vert ^{2}+\nu x^{\ast }\left\vert
yx^{-1}\right\vert ^{2}x-x^{\ast }\left\vert yx^{-1}\right\vert ^{2\nu }x
\label{e.4.6} \\
& \geq \delta _{\nu }\left( m^{2},M^{2}\right) \left\vert x\right\vert ^{2} 
\notag
\end{align}%
and since $x^{\ast }\left\vert yx^{-1}\right\vert ^{2}x=x^{\ast }\left(
x^{\ast }\right) ^{-1}\left\vert y\right\vert ^{2}x^{-1}x=\left\vert
y\right\vert ^{2}$ and $x^{\ast }\left\vert yx^{-1}\right\vert ^{2\nu
}x=x\circledS _{\nu }y$ we get from (\ref{e.4.6}) the desired result (\ref%
{e.4.3}).
\end{proof}

\begin{corollary}
\label{c.4.1}With the assumptions of Theorem \ref{t.4.1} we have%
\begin{equation}
R\times \left\{ 
\begin{array}{l}
\left( 1-m\right) ^{2}\left\vert x\right\vert ^{2}\text{if }M<1, \\ 
\\ 
\max \left\{ \left( 1-m\right) ^{2},\left( M-1\right) ^{2}\right\}
\left\vert x\right\vert ^{2}\text{ if }m\leq 1\leq M, \\ 
\\ 
\left( M-1\right) ^{2}\left\vert x\right\vert ^{2}\text{ if }1<m,%
\end{array}%
\right.  \label{e.4.7}
\end{equation}%
\begin{equation*}
\geq \left\vert x\right\vert ^{2}\nabla _{\nu }\left\vert y\right\vert
^{2}-x\circledS _{\nu }y\geq r\times \left\{ 
\begin{array}{l}
\left( 1-M\right) ^{2}\left\vert x\right\vert ^{2}\text{ if }M<1, \\ 
\\ 
0\text{ if }m\leq 1\leq M, \\ 
\\ 
\left( m-1\right) ^{2}\left\vert x\right\vert ^{2}\text{ if }1<m,%
\end{array}%
\right. ,
\end{equation*}%
where $\nu \in \lbrack 0,1],$ $r=\min \left\{ 1-\nu ,\nu \right\} $ and $%
R=\max \left\{ 1-\nu ,\nu \right\} .$
\end{corollary}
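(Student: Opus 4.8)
The plan is to read off the explicit numerical bounds directly from Theorem \ref{t.4.1} by estimating the two quantities $\Delta_\nu(m^2,M^2)$ and $\delta_\nu(m^2,M^2)$ that appear there. Theorem \ref{t.4.1} already supplies
\begin{equation*}
\Delta_\nu(m^2,M^2)\left\vert x\right\vert^2\geq \left\vert x\right\vert^2\nabla_\nu\left\vert y\right\vert^2-x\circledS_\nu y\geq \delta_\nu(m^2,M^2)\left\vert x\right\vert^2,
\end{equation*}
so it will suffice to dominate $\Delta_\nu$ from above and minorize $\delta_\nu$ from below by the stated piecewise expressions, scaled by $R=\max\{1-\nu,\nu\}$ and $r=\min\{1-\nu,\nu\}$ respectively.

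The main tool will be the scalar Kittaneh--Manasrah inequality (\ref{e.3.8}). First I would specialize it to $a=1$, $b=t>0$ and $p=\nu$; since $\max\{\nu,1-\nu\}=R$ and $\min\{\nu,1-\nu\}=r$, this yields
\begin{equation*}
R\,(\sqrt{t}-1)^2\geq A_\nu(1,t)-G_\nu(1,t)=f_\nu(t)\geq r\,(\sqrt{t}-1)^2
\end{equation*}
for every $t>0$, where $f_\nu$ is the function introduced just before Lemma \ref{l.4.1}. Evaluating at the two endpoints $t=m^2$ and $t=M^2$, so that $\sqrt{t}$ becomes $m$ and $M$, produces the four elementary estimates $f_\nu(m^2)\leq R(1-m)^2$, $f_\nu(M^2)\leq R(M-1)^2$, $f_\nu(m^2)\geq r(1-m)^2$ and $f_\nu(M^2)\geq r(1-M)^2$.

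It then remains only to feed these into the piecewise formulas (\ref{e.4.1}) and (\ref{e.4.2}) of Lemma \ref{l.4.1}, matching the three cases $M<1$, $m\leq 1\leq M$, and $1<m$. In the first case $\Delta_\nu=f_\nu(m^2)$ and $\delta_\nu=f_\nu(M^2)$; in the last case these roles interchange; in the middle case $\Delta_\nu=\max\{f_\nu(m^2),f_\nu(M^2)\}$ while $\delta_\nu=0$, for which the lower bound $r\cdot 0$ is trivial. Using $(1-m)^2=(m-1)^2$ and $(1-M)^2=(M-1)^2$, the upper estimates assemble into the $R$-scaled bracket of (\ref{e.4.7}) and the lower ones into the $r$-scaled bracket, completing the argument after a final multiplication by $\left\vert x\right\vert^2$ on both sides. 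I do not expect a genuine obstacle here: the reasoning is a straightforward specialization of an already-proved scalar inequality, and the only points demanding care are the correct identification of the weights ($\max$ and $\min$ of $\{\nu,1-\nu\}$ versus the $R,r$ in the statement) and keeping the three interval-location cases aligned between Lemma \ref{l.4.1} and the display in the corollary.
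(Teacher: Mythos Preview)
Your proposal is correct and follows essentially the same route as the paper: specialize the Kittaneh--Manasrah inequality (\ref{e.3.8}) to $a=1$, $b=t$ to obtain $R(\sqrt{t}-1)^2\geq f_\nu(t)\geq r(\sqrt{t}-1)^2$, then plug $t=m^2$ and $t=M^2$ into the piecewise descriptions of $\Delta_\nu$ and $\delta_\nu$ from Lemma \ref{l.4.1} and invoke Theorem \ref{t.4.1}. Your write-up is in fact slightly more explicit than the paper's about the case-by-case bookkeeping, but the argument is the same.
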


\begin{proof}
From the inequality (\ref{e.3.8}) we have for $b=t$ and $a=1$ that 
\begin{equation*}
R\left( \sqrt{t}-1\right) ^{2}\geq f_{\nu }\left( t\right) =1-\nu +\nu
t-t^{\nu }\geq r\left( \sqrt{t}-1\right) ^{2}
\end{equation*}%
for any $t\in \lbrack 0,1].$

Then we have 
\begin{equation*}
\Delta _{\nu }\left( m^{2},M^{2}\right) \leq R\times \left\{ 
\begin{array}{l}
\left( 1-m\right) ^{2}\text{ if }M<1, \\ 
\\ 
\max \left\{ \left( 1-m\right) ^{2},\left( M-1\right) ^{2}\right\} \text{ if 
}m\leq 1\leq M, \\ 
\\ 
\left( M-1\right) ^{2}\text{ if }1<m%
\end{array}%
\right.
\end{equation*}%
and%
\begin{equation*}
\delta _{\nu }\left( m^{2},M^{2}\right) \geq r\times \left\{ 
\begin{array}{l}
\left( 1-M\right) ^{2}\text{ if }M<1, \\ 
\\ 
0\text{ if }m\leq 1\leq M, \\ 
\\ 
\left( m-1\right) ^{2}\text{ if }1<m,%
\end{array}%
\right.
\end{equation*}%
which by Theorem \ref{t.4.1} proves the corollary.
\end{proof}

We observe that, with the assumptions of Theorem \ref{t.4.1} and if $A$ is a
unital $C^{\ast }$-algebra, then by taking the norm in (\ref{e.4.3}), we get 
\begin{equation}
\Delta _{\nu }\left( m^{2},M^{2}\right) \left\Vert x\right\Vert ^{2}\geq
\left\Vert \left\vert x\right\vert ^{2}\nabla _{\nu }\left\vert y\right\vert
^{2}-x\circledS _{\nu }y\right\Vert \geq \delta _{\nu }\left(
m^{2},M^{2}\right) \left\Vert x\right\Vert ^{2},  \label{e.4.7.1}
\end{equation}%
for any $\nu \in \left[ 0,1\right] ,$ which, by triangle inequality also
implies that 
\begin{equation}
\Delta _{\nu }\left( m^{2},M^{2}\right) \left\Vert x\right\Vert ^{2}\geq
\left\Vert \left( 1-\nu \right) \left\vert x\right\vert ^{2}+\nu \left\vert
y\right\vert ^{2}\right\Vert -\left\Vert \left\vert yx^{-1}\right\vert ^{\nu
}x\right\Vert ^{2}\geq 0  \label{e.4.7.2}
\end{equation}%
for any $\nu \in \left[ 0,1\right] .$ This provides a reverse for the second
inequality in (\ref{e.2.9.1}).

\begin{remark}
\label{r.4.2}If $0<a,$ $b\in A$ and there exists the constants $0<k<K$ such
that 
\begin{equation}
Ka\geq b\geq ka>0,  \label{kK}
\end{equation}%
then by (\ref{e.4.3}) we get 
\begin{equation}
\Delta _{\nu }\left( k,K\right) a\geq a\nabla _{\nu }b-a\sharp _{\nu }b\geq
\delta _{\nu }\left( k,K\right) a,  \label{e.4.8}
\end{equation}%
while by (\ref{e.4.7}) we get 
\begin{align}
& R\times \left\{ 
\begin{array}{l}
\left( 1-\sqrt{k}\right) ^{2}a\text{ if }K<1, \\ 
\\ 
\max \left\{ \left( 1-\sqrt{k}\right) ^{2},\left( \sqrt{K}-1\right)
^{2}\right\} a\text{ if }m\leq 1\leq M, \\ 
\\ 
\left( \sqrt{K}-1\right) ^{2}a\text{ if }1<k,%
\end{array}%
\right.  \label{e.4.9} \\
&  \notag \\
& \geq a\nabla _{\nu }b-a\sharp _{\nu }b\geq r\times \left\{ 
\begin{array}{l}
\left( 1-\sqrt{K}\right) ^{2}a\text{ if }K<1, \\ 
\\ 
0\text{ if }k\leq 1\leq K, \\ 
\\ 
\left( \sqrt{k}-1\right) ^{2}a\text{ if }1<k%
\end{array}%
\right. ,  \notag
\end{align}%
where $\nu \in \lbrack 0,1],$ $r=\min \left\{ 1-\nu ,\nu \right\} $ and $%
R=\max \left\{ 1-\nu ,\nu \right\} .$
\end{remark}


\begin{thebibliography}{99}
\bibitem{AFK} H. Alzer, C. M. da Fonseca and A. Kova\v{c}ec, Young-type
inequalities and their matrix analogues, \textit{Linear and Multilinear
Algebra}, \textbf{63 }(2015), Issue 3, 622-635.

\bibitem{BD} F. F. Bonsall and J. Duncan, \textit{Complete Normed Algebra},
Springer-Verlag, New York, 1973.

\bibitem{C} J. B. Conway, \textit{A Course in Functional Analysis, Second
Edition}, Springer-Verlag, New York, 1990.

\bibitem{SSDBul} S.\ S. Dragomir, Bounds for the normalized Jensen
functional, \textit{Bull. Austral. Math. Soc.} \textbf{74}(3)(2006), 417-478.

\bibitem{F} B. Q. Feng, The geometric means in Banach $\ast $-algebra, 
\textit{J. Operator Theory} \textbf{57} (2007), No. 2, 243-250.

\bibitem{Fu} T. Furuta, Extension of the Furuta inequality and Ando-Hiai
log-majorization. \textit{Linear Algebra Appl.} \textbf{219} (1995),
139--155.

\bibitem{KKLP} F. Kittaneh, M. Krni\'{c}, N. Lovri\v{c}evi\'{c} and J. Pe%
\v{c}ari\'{c}, Improved arithmetic-geometric and Heinz means inequalities
for Hilbert space operators. \textit{Publ. Math. Debrecen} \textbf{80}
(2012), no. 3-4, 465--478.

\bibitem{KM1} F. Kittaneh and Y. Manasrah, Improved Young and Heinz
inequalities for matrix, \textit{J. Math. Anal. Appl.} \textbf{361} (2010),
262-269.

\bibitem{KM2} F. Kittaneh and Y. Manasrah, Reverse Young and Heinz
inequalities for matrices, \textit{Linear Multilinear Algebra,} \textbf{59}
(2011), 1031-1037.

\bibitem{M} G. J. Murphy, $C^{\ast }$\textit{-Algebras and Operator Theory},
Academic Press, 1990.

\bibitem{O} T. Okayasu, The L\"{o}wner-Heinz inequality in Banach $\ast $%
-algebra, \textit{Glasgow Math. J.} \textbf{42} (2000), 243-246.

\bibitem{SF} S. Shirali and J. W. M. Ford, Symmetry in complex involutory
Banach algebras, II. \textit{Duke Math. J.} \textbf{37} (1970), 275-280.

\bibitem{TU} K. Tanahashi and A. Uchiyama, The Furuta inequality in Banach $%
\ast $-algebras, \textit{Proc. Amer. Math. Soc.} 128 (2000), 1691-1695.
\end{thebibliography}
\end{document}